\documentclass[12pt]{amsart}
\usepackage{amsfonts}
\usepackage{mathrsfs}
\usepackage{amsmath}
\usepackage{amsmath, amsthm, amssymb}
\usepackage{color}
\numberwithin{equation}{section}
\textheight=8.9in
\textwidth =6.0in
\voffset= -.68in
\hoffset=-.6in
\parskip=6pt plus 2pt minus 2pt

\renewcommand{\theequation}{\theequation. \arabic{equation}}
\numberwithin{equation}{section}
\newtheorem{thm}{Theorem}[section]

\newtheorem{rem}[thm]{Remark}
\newtheorem{prop}[thm]{Proposition}
\newtheorem{defn}[thm]{Definition}

\def\squarebox#1{\hbox to #1{\hfill\vbox to #1{\vfill}}}

\begin{document}
\title[On the complex Hermite polynomials]
{On  the complex Hermite polynomials and partial differential equations}
\dedicatory{This paper is dedicated to the memory of Professor Atiyah and Dr.Lewis}
\author{Zhi-Guo Liu}
\date{\today}
\address{School of Mathematical Sciences and  Shanghai  Key Laboratory of PMMP, East China Normal University, 500 Dongchuan Road,
	Shanghai 200241, P. R. China} \email{zgliu@math.ecnu.edu.cn;
	liuzg@hotmail.com}

\thanks{This work was supported by the National Science Foundation of China (Grant No. 11571114) and Science and Technology Commission of Shanghai Municipality (Grant No. 13dz2260400).}

\thanks{ 2010 Mathematics Subject Classifications : 33C45,  32A05,  32A10, 35C11}
\thanks{ Keywords: complex  Hermite polynomials; partial differential equations; Poisson Kernel; analytic functions in several variables.}
\begin{abstract}
In this paper we use a set of  partial differential equations to prove an expansion theorem for multiple complex Hermite polynomials. This expansion theorem allows us to develop a systematic and completely new approach  to  the complex Hermite polynomials.
Using this expansion, we derive the Poisson Kernel,  the Nielsen type formula, the addition formula for the complex Hermite polynomials with ease.  A multilinear generating function for the complex Hermite polynomials is proved.
\end{abstract}
\maketitle
\section{Introduction and preliminary}
\noindent
 With the aid of  a system of partial differential equations,  we proved an expansion theorem for the bivariate Hermite polynomials in \cite[Theorem~1.8]{LiuHermite2017}. This expansion theorem allows us to develop a systematic method to prove the identities involving the Hermite polynomials.  I find the idea of \cite{LiuHermite2017}  has universal significance, which stimulates us to develop a new method to treat  the complex Hermite polynomials.

\begin{defn}\label{comHermite}
For complex numbers $x, y$ and non-negative integers $m, n$, the complex Hermite polynomials are defined by
\begin{equation*}
H_{m, n}(x, y)=\sum_{k=0}^{m\land n}(-1)^k k! {m\choose k} {n\choose k}
x^{m-k} y^{n-k},
\end{equation*}
where $m\land n=\min\{m, n\}.$
\end{defn}
The polynomials $H_{m, n}(z, \bar{z})$ were first considered by It\^{o} \cite{Ito1952} in his study of complex multiple Wiener integrals and their applications to normal stochastic processes.  These polynomials are also applied in \cite{AliBH2019} to coherent states, and in \cite{Wunsche1998}, \cite{Wunsche1999} to quantum optics and quasi-probabilities respectively.  Several papers about this topic have been published in recent year,  see for example  \cite{Ghanmi2013}, \cite{Ismail2016}, \cite{IsmailZhang}, \cite{IsmailZeng2015}.

For our purpose, we need extend slightly the complex Hermite polynomials by adding an extra parameter to them, and for convenience, we still call the extended complex Hermite polynomials as the complex Hermite polynomials.
\begin{defn}\label{triHermitedefn} For any complex numbers $x, y$ and $z$, the complex Hermite polynomials $H_{m, n}(x, y, z)$ are defined as
	\begin{equation*}
	H_{m, n}(x, y, z)=\sum_{k=0}^{m\land n} k! {m\choose k} {n\choose k}
	x^{m-k} y^{n-k} z^k.
	\end{equation*}
\end{defn}
It is obvious that when $z=-1,$ $H_{m, n}(x, y, z)$ reduce to the usual complex Hermite polynomials $H_{m, n}(x, y).$  By a simple calculation, we also find the following proposition.
\begin{prop}\label{Hermiterelation}
The polynomials $H_{m, n}(x,y,z)$ and the polynomials $H_{m, n}(x, y)$ satisfy
\[
H_{m, n}(x, y, z)=\left( \sqrt{-z}\right)^{m+n} H_{m,n} \left(\frac{x}{\sqrt{-z}}, \frac{y}{\sqrt{-z}}\right).
\]
\end{prop}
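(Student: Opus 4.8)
The plan is to prove the identity by direct substitution, expanding the right-hand side via Definition~\ref{comHermite} and showing term-by-term that it collapses to the series in Definition~\ref{triHermitedefn}. First I would write $u=x/\sqrt{-z}$ and $v=y/\sqrt{-z}$ and apply the defining sum for the two-variable polynomials, so that
\[
\left(\sqrt{-z}\right)^{m+n}H_{m,n}\!\left(\frac{x}{\sqrt{-z}},\frac{y}{\sqrt{-z}}\right)
=\left(\sqrt{-z}\right)^{m+n}\sum_{k=0}^{m\wedge n}(-1)^k k!\binom{m}{k}\binom{n}{k}\,u^{m-k}v^{n-k}.
\]

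The key step is to track the powers of $\sqrt{-z}$. Since $u^{m-k}v^{n-k}=x^{m-k}y^{n-k}\left(\sqrt{-z}\right)^{-(m+n-2k)}$, the overall prefactor combines with this to give $\left(\sqrt{-z}\right)^{m+n}\left(\sqrt{-z}\right)^{-(m+n-2k)}=\left(\sqrt{-z}\right)^{2k}=(-z)^{k}$. Inserting this into each summand, the $k$-th term becomes $(-1)^{k}(-z)^{k}\,k!\binom{m}{k}\binom{n}{k}x^{m-k}y^{n-k}$, and the sign simplification $(-1)^{k}(-z)^{k}=z^{k}$ turns the sum into exactly $\sum_{k=0}^{m\wedge n}k!\binom{m}{k}\binom{n}{k}x^{m-k}y^{n-k}z^{k}$, which is $H_{m,n}(x,y,z)$ by Definition~\ref{triHermitedefn}.

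The only point requiring care—and the spot I would flag as the main (though minor) obstacle—is the ambiguity in the branch of $\sqrt{-z}$. I would resolve this by observing that in every term the power of $\sqrt{-z}$ that actually survives is the \emph{even} power $\left(\sqrt{-z}\right)^{2k}=(-z)^{k}$, so the right-hand side is a genuine polynomial in $z$ independent of the chosen branch; this is consistent with the left-hand side, which is manifestly a polynomial. Since the expressions agree coefficient-by-coefficient after this simplification, the identity follows and the proof is complete.
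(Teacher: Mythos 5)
Your computation is correct and is precisely the ``simple calculation'' the paper alludes to (the paper states Proposition~\ref{Hermiterelation} without writing out a proof): expanding the right-hand side via Definition~\ref{comHermite}, the powers of $\sqrt{-z}$ combine to $\left(\sqrt{-z}\right)^{2k}=(-z)^{k}$ in each term, and $(-1)^{k}(-z)^{k}=z^{k}$ recovers Definition~\ref{triHermitedefn}. Your remark that only even powers of $\sqrt{-z}$ survive, making the identity branch-independent, is a careful touch the paper leaves implicit.
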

Thus we may regard $H_{m, n}(x, y, z)$ as a variant form of the usual complex Hermite polynomials $H_{m, n}(x, y)$.  Although $H_{m, n}(x, y, z)$ are equivalent to the complex Hermite polynomials $H_{m, n}(x, y)$, the former have a richer mathematical structure than the latter.
\begin{rem} \rm The polynomials  $H_{m, n}(x, y, -z)$ have been considered by Datolli et al. \cite[pp.23--24]{ DattoliOTTOVA1997}, and several basic properties about $H_{m, n}(x, y, -z)$  were obtained by them.
\end{rem}	
To state our expansion theorem, we now introduce the definition of the $k$-fold
complex Hermite series in several variables.
\begin{defn}\label{kfoldHermite:pp} The $k$-fold complex Hermite series are defined as
	\[
	\sum_{m_1, n_1, \ldots, m_k, n_k=0}^\infty \lambda_{m_1, n_1, \ldots, m_k, n_k} H_{m_1, n_1} (x_1, y_1, z_1) \cdots H_{m_k, n_k} (x_k, y_k, z_k),
	\]	
	where $\lambda_{m_1, n_1, \ldots, m_k, n_k}$ are complex numbers independent of
	$x_1, y_1, z_1, \ldots, x_k, y_k, z_k.$
\end{defn}	
The principal result of this paper is the following expansion theorem
for the analytic functions in several variables.
\begin{thm}\label{LiutriHermite:eqna}
	If $f(x_1, y_1, z_1, \ldots, x_k, y_k, z_k)$ is a $3k$-variable analytic function at $(0, 0, \ldots, 0)\in \mathbb{C}^{3k}$, then, $f$ can be expanded in an absolutely and uniformly convergent $k$-fold  complex  Hermite series, if and only if, for $j\in \{1, 2, \ldots, k\}, f$ satisfies the partial differential equations
	\[
	\frac{\partial f}{\partial z_j}
	=\frac{\partial^2 f}{\partial x_j \partial y_j}.
	\]
\end{thm}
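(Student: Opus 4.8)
The plan is to establish both implications, disposing of the easy direction first. For necessity I would begin by recording the single-variable identity that each building block solves the heat-type equation, namely $\partial_{z}H_{m,n}(x,y,z)=\partial_{x}\partial_{y}H_{m,n}(x,y,z)$. This is a direct computation from Definition~\ref{triHermitedefn}: differentiating the explicit finite sum and comparing the coefficient of $z^{k}$ on the two sides reduces to the elementary identity $(k+1)(k+1)!\binom{m}{k+1}\binom{n}{k+1}=k!\binom{m}{k}\binom{n}{k}(m-k)(n-k)$, both sides equalling $\frac{m!\,n!}{k!\,(m-k-1)!\,(n-k-1)!}$. Granting this, if $f=\sum\lambda\,H_{m_1,n_1}(x_1,y_1,z_1)\cdots H_{m_k,n_k}(x_k,y_k,z_k)$ converges absolutely and uniformly, then by the Weierstrass theorem for holomorphic functions of several complex variables the sum is analytic and may be differentiated term by term; since the factor in $(x_j,y_j,z_j)$ obeys the displayed equation while every other factor is annihilated by both $\partial_{z_j}$ and $\partial_{x_j}\partial_{y_j}$, each term satisfies $\partial_{z_j}f=\partial_{x_j}\partial_{y_j}f$, and hence so does $f$.

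For sufficiency the idea is to reconstruct $f$ from its restriction to $z_1=\cdots=z_k=0$. Iterating the equations gives $\partial_{z_1}^{l_1}\cdots\partial_{z_k}^{l_k}f=(\partial_{x_1}\partial_{y_1})^{l_1}\cdots(\partial_{x_k}\partial_{y_k})^{l_k}f$ for every multi-index, so the Taylor expansion of $f$ in the $z_j$ about the origin becomes
\[
f=\sum_{l_1,\dots,l_k\ge 0}\frac{z_1^{l_1}\cdots z_k^{l_k}}{l_1!\cdots l_k!}(\partial_{x_1}\partial_{y_1})^{l_1}\cdots(\partial_{x_k}\partial_{y_k})^{l_k}F,
\]
where $F(x_1,y_1,\dots,x_k,y_k)=f(x_1,y_1,0,\dots,x_k,y_k,0)$ is analytic in its $2k$ variables. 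Expanding $F=\sum\lambda_{m_1,n_1,\dots,m_k,n_k}\,x_1^{m_1}y_1^{n_1}\cdots x_k^{m_k}y_k^{n_k}$ and using $(\partial_x\partial_y)^{l}x^{m}y^{n}=\frac{m!\,n!}{(m-l)!(n-l)!}x^{m-l}y^{n-l}$, the inner one-variable sum $\sum_{l}\frac{z^{l}}{l!}(\partial_x\partial_y)^{l}x^{m}y^{n}$ collapses \emph{exactly} to $H_{m,n}(x,y,z)$ by Definition~\ref{triHermitedefn}. Reorganizing the resulting multiple sum then produces the required $k$-fold complex Hermite series, with coefficients the very same $\lambda_{m_1,n_1,\dots,m_k,n_k}$ read off from $F$; this simultaneously yields uniqueness of the expansion.

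The main obstacle is purely analytic: justifying the $z_j$-Taylor expansion, the interchange of summations in the reorganization, and the absolute and uniform convergence of the final Hermite series on a neighborhood of the origin. The crude Cauchy bound $|\lambda_{\dots}|\le M/\rho^{\,m_1+n_1+\cdots}$ on a polydisc of analyticity is not adequate on its own, since $|H_{m,n}(x,y,z)|$ can grow rapidly. The remedy is to exploit the recursion the equation forces on the full Taylor coefficients $c_{m,n,l}$ of $f$; in the one-variable model $\partial_z f=\partial_x\partial_y f$ gives $c_{m,n,l}=\frac{1}{l!}\frac{(m+l)!}{m!}\frac{(n+l)!}{n!}\lambda_{m+l,n+l}$, whence $\lambda_{p,q}$ enjoys the whole family of estimates $|\lambda_{p,q}|\le \frac{M}{\rho^{\,p+q-l}}\cdot\frac{l!\,(p-l)!\,(q-l)!}{p!\,q!}$ valid for each $0\le l\le p\wedge q$. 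Optimizing over $l$ extracts decay strong enough to dominate $H_{p,q}$ and force geometric convergence on a sufficiently small polydisc; the general case follows by applying this estimate block by block in each $(x_j,y_j,z_j)$, which I would organize by induction on $k$. I expect that controlling this convergence, rather than the formal algebra, is where the genuine work lies.
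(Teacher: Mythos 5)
Your proposal is correct, and at its core it runs on the same engine as the paper's proof: both reconstruct $f$ from its restriction to $z_j=0$ via the coefficient recursion forced by $\partial_{z}f=\partial_{x}\partial_{y}f$, and both resum using the operational identity $H_{m,n}(x,y,z)=\exp\bigl(z\,\tfrac{\partial^2}{\partial x\partial y}\bigr)\{x^my^n\}$ (the paper's Proposition~2.3); your necessity direction is likewise the paper's, except that you verify the heat-type equation for $H_{m,n}$ by direct coefficient comparison rather than from the generating function, and your binomial identity checks out. The organizational difference is that the paper argues by induction on $k$, expanding in $H_{m_1,n_1}(x_1,y_1,z_1)$ with function coefficients $c_{m_1,n_1}$ and verifying via the Maclaurin coefficient formula that these are analytic and satisfy the remaining equations, whereas you Taylor-expand in all of $z_1,\dots,z_k$ simultaneously and collapse every block at once; both work, and yours is arguably more symmetric and avoids the analyticity check on the coefficient functions. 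Where you genuinely part ways with the paper is in taking convergence seriously: the paper interchanges summations and differentiations essentially formally. But your pessimism there is unwarranted, and the proposed optimization over $l$ is unnecessary. Your own recursion $c_{m,n,l}=\frac{1}{l!}\frac{(m+l)!}{m!}\frac{(n+l)!}{n!}\lambda_{m+l,n+l}$ has \emph{positive} factors, and one checks directly that $k!\binom{m'+k}{k}\binom{n'+k}{k}\,|\lambda_{m'+k,n'+k}|=|c_{m',n',k}|$; hence expanding each $H_{m,n}$ in $\sum_{m,n}\lambda_{m,n}H_{m,n}(x,y,z)$ into monomials reproduces, term for term, the Maclaurin series of $f$, so that $\sum_{m,n}|\lambda_{m,n}|\,H_{m,n}(|x|,|y|,|z|)$ is exactly the absolute Maclaurin series evaluated at $(|x|,|y|,|z|)$. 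Absolute and uniform convergence of the Hermite series on closed polydiscs inside the domain of convergence is therefore automatic, with no growth estimates on $H_{p,q}$ needed; applied block by block, this settles general $k$ and, incidentally, retroactively justifies the paper's formal rearrangements.
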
	
This theorem is a powerful tool for proving formulas involving the  complex Hermite polynomials, which allows us to develop a systematic
method to derive identities involving the complex Hermite polynomials.
\section{The proof of Theorem~\ref{LiutriHermite:eqna}}
 Using $\exp(sx+ty+stz)=\exp(sx)\exp(ty)\exp(stz)$ and  the Maclaurin expansion for the exponential function, one can easily derive  Proposition~\ref{triH:propa}.
\begin{prop}\label{triH:propa} For any complex numbers $x, y, z$ and $s, t$, we have
\[
\sum_{m, n=0}^{\infty} H_{m, n} (x, y, z) \frac{s^m t^n}{m! n!}
=\exp(sx+ty+stz).
\]	
\end{prop}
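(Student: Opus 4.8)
The plan is to verify the identity by expanding the right-hand side as a triple power series in $s$ and $t$ and matching coefficients against Definition~\ref{triHermitedefn}. First I would use the factorization $\exp(sx+ty+stz)=\exp(sx)\exp(ty)\exp(stz)$ suggested in the text and apply the Maclaurin expansion of the exponential to each of the three factors, obtaining the three series $\sum_{a\ge 0}(sx)^a/a!$, $\sum_{b\ge 0}(ty)^b/b!$, and $\sum_{k\ge 0}(stz)^k/k!$. For fixed complex $x,y,z,s,t$ each of these converges absolutely, so their product may be formed and rearranged term by term; this is the only analytic point that formally requires justification, and it is entirely routine.

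Next I would multiply the three series and record the product as a single sum over the indices $a,b,k\ge 0$, in which the generic term is $\dfrac{x^a y^b z^k}{a!\,b!\,k!}\,s^{a+k}t^{b+k}$. To extract the coefficient of $s^m t^n$ I set $a+k=m$ and $b+k=n$, so that $a=m-k$ and $b=n-k$; the constraints $a,b\ge 0$ then force the summation index $k$ to range over $0\le k\le m\land n$. This reindexing is the heart of the argument, converting the free triple sum into the finite inner sum that defines the complex Hermite polynomial, and the truncation at $m\land n$ arises automatically from the non-negativity of $a$ and $b$ rather than being imposed by hand.

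Finally, I would compare the coefficient of $s^m t^n/(m!\,n!)$ so produced, namely
\[
\sum_{k=0}^{m\land n}\frac{m!\,n!\;x^{m-k}\,y^{n-k}\,z^k}{(m-k)!\,(n-k)!\,k!},
\]
with the defining expression for $H_{m,n}(x,y,z)$, using the elementary identity $\dfrac{m!\,n!}{(m-k)!\,(n-k)!\,k!}=k!\,{m\choose k}{n\choose k}$ to see that the two sums agree term by term. This identifies the coefficient with $H_{m,n}(x,y,z)$ and completes the proof. I expect no genuine obstacle: the whole computation is a two-variable Cauchy product sharing the common index $k$, and its one subtle feature—the cutoff of the $k$-summation at $m\land n$—is dictated by the ranges of $a$ and $b$, which is exactly why the exponential generating function reproduces the finite sum in the definition.
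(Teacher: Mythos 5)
Your proposal is correct and is precisely the argument the paper intends: the paper proves Proposition~\ref{triH:propa} in one line by citing the factorization $\exp(sx+ty+stz)=\exp(sx)\exp(ty)\exp(stz)$ and the Maclaurin expansion, and your write-up simply carries out that computation in full, with the reindexing $a=m-k$, $b=n-k$ and the identity $\frac{m!\,n!}{(m-k)!\,(n-k)!\,k!}=k!\binom{m}{k}\binom{n}{k}$ done correctly. No gaps; the absolute convergence remark adequately justifies the rearrangement of the triple product.
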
	
In order to prove Theorem~\ref{LiutriHermite:eqna}, we need the  following three propositions.
\begin{prop}\label{triH:propb} The complex Hermite polynomials $H_{m, n}(x, y, z)$ satisfy the partial differential equation
\[
\frac{\partial H_{m, n}}{\partial z}
=\frac{\partial^2 H_{m, n}}{\partial x \partial y}.
\]	
\end{prop}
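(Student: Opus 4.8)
The plan is to avoid any term-by-term manipulation of the defining sum and instead work with the generating function already recorded in Proposition~\ref{triH:propa}. Write $F(x,y,z,s,t)=\exp(sx+ty+stz)$. Since $F$ is entire in all of its arguments, its double power series $\sum_{m,n\ge 0}H_{m,n}(x,y,z)\,s^mt^n/(m!\,n!)$ converges uniformly on compact sets, so the operators $\partial/\partial z$ and $\partial^2/\partial x\partial y$ may be applied term by term; this is the only analytic point that needs a comment, and it is immediate.

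First I would differentiate $F$ directly. A one-line computation gives $\partial F/\partial z=st\,F$ and, since $\partial F/\partial x=s\,F$ and then $\partial(sF)/\partial y=st\,F$, also $\partial^2F/\partial x\partial y=st\,F$. Thus the two mixed derivatives of the generating function coincide. Applying these operators to the identity of Proposition~\ref{triH:propa} and differentiating under the summation sign yields
\[
\sum_{m,n\ge 0}\frac{\partial H_{m,n}}{\partial z}\,\frac{s^mt^n}{m!\,n!}=st\,F=\sum_{m,n\ge 0}\frac{\partial^2 H_{m,n}}{\partial x\partial y}\,\frac{s^mt^n}{m!\,n!}.
\]
Comparing the coefficients of $s^mt^n/(m!\,n!)$ on the two sides, which is legitimate by the uniqueness of Taylor coefficients, gives $\partial H_{m,n}/\partial z=\partial^2 H_{m,n}/\partial x\partial y$ for every $m,n$, which is the assertion.

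There is essentially no hard step here; the proposition is a routine consequence of the generating function, and the only thing to watch is that differentiation commutes with the (entire) sum. Should one prefer a self-contained argument straight from Definition~\ref{triHermitedefn}, the same identity follows by differentiating the finite sum, shifting the summation variable $k\mapsto k+1$ in $\partial H_{m,n}/\partial z$, discarding the $k=m\land n$ term of $\partial^2H_{m,n}/\partial x\partial y$ (which vanishes because a factor $m-k$ or $n-k$ is zero), and verifying the elementary binomial identity
\[
(k+1)(k+1)!\binom{m}{k+1}\binom{n}{k+1}=(m-k)(n-k)\,k!\binom{m}{k}\binom{n}{k}.
\]
In either route the only bookkeeping concerns the summation ranges, and there is no genuine obstacle.
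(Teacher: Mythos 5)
Your main argument is exactly the paper's proof: apply $\partial^2/\partial x\,\partial y$ and $\partial/\partial z$ to the generating function of Proposition~\ref{triH:propa}, observe both yield $st\exp(sx+ty+stz)$, and equate coefficients of $s^mt^n$ (you add the brief, correct justification for term-by-term differentiation that the paper leaves tacit). Your optional direct route from Definition~\ref{triHermitedefn} is also sound --- the stated binomial identity $(k+1)(k+1)!\binom{m}{k+1}\binom{n}{k+1}=(m-k)(n-k)\,k!\binom{m}{k}\binom{n}{k}$ checks out --- but the primary proof coincides with the paper's.
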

\begin{proof}
Applying the partial differential operator ${\partial^2}/{\partial x \partial y}$ to act both sides of the equation in Proposition~\ref{triH:propa}, we find that
\[
\sum_{m, n=0}^{\infty} \frac{\partial^2 H_{m, n}}{\partial x \partial y} \frac{s^m t^n}{m! n!}
=st\exp(sx+ty+stz).
\]
Upon differentiating both sides of the equation in Proposition~\ref{triH:propb} with respect to $z$, we arrive at
\[
\sum_{m, n=0}^{\infty} \frac{\partial H_{m, n}}{\partial z} \frac{s^m t^n}{m! n!}=st\exp(sx+ty+stz).
\]
A comparison of these two equations immediately gives us that
\[
\sum_{m, n=0}^{\infty} \frac{\partial H_{m, n}}{\partial z} \frac{s^m t^n}{m! n!}=\sum_{m, n=0}^{\infty} \frac{\partial^2 H_{m, n}}{\partial x \partial y} \frac{s^m t^n}{m! n!}.
\]
Equating the coefficients of like powers of $s$ and $t$, we complete the proof of the proposition.
\end{proof}

\begin{prop}\label{triH:propc} The following exponential operator representation for the complex Hermite polynomials holds:
\[
H_{m, n} (x, y, z)
=\exp \left( z\frac{\partial^2 }{\partial x \partial y} \right)\{x^m y^n\}.
\]
\end{prop}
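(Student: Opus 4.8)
The plan is to expand the exponential operator as a power series and apply it term by term to the monomial $x^m y^n$. Writing
\[
\exp\left(z\frac{\partial^2}{\partial x\partial y}\right)=\sum_{k=0}^{\infty}\frac{z^k}{k!}\,\frac{\partial^{2k}}{\partial x^k\partial y^k},
\]
I would first observe that since $x^m y^n$ has degree $m$ in $x$ and $n$ in $y$, every term with $k>m\land n$ annihilates it, so the series automatically truncates to a finite sum running from $k=0$ to $k=m\land n$. This removes any convergence concern and reduces the identity to a finite comparison.

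Next I would carry out the differentiation. Because $\partial^2/\partial x\,\partial y$ factors into $\partial/\partial x$ acting on the $x$-part and $\partial/\partial y$ acting on the $y$-part, we have
\[
\frac{\partial^{2k}}{\partial x^k\partial y^k}\{x^m y^n\}=\frac{m!}{(m-k)!}\,\frac{n!}{(n-k)!}\,x^{m-k}y^{n-k}.
\]
Substituting this back and collecting the factor $z^k/k!$ produces a sum whose general coefficient is $\dfrac{m!\,n!}{k!\,(m-k)!\,(n-k)!}$.

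The final step is purely combinatorial: I would verify that $\dfrac{m!\,n!}{k!\,(m-k)!\,(n-k)!}=k!\binom{m}{k}\binom{n}{k}$, after which the resulting expression matches the defining sum for $H_{m,n}(x,y,z)$ in Definition~\ref{triHermitedefn} term by term, completing the proof.

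I do not expect any genuine obstacle here; the only points requiring care are the truncation of the exponential series (guaranteed because the operator strictly lowers total degree, so the formal power series in the operator acts as a bona fide finite sum on polynomials) and the bookkeeping of the factorials. An equally short alternative bypasses even this computation: applying $\exp\!\left(z\,\partial^2/\partial x\,\partial y\right)$ to the double generating series $\sum_{m,n\ge 0} x^m y^n s^m t^n/(m!\,n!)=\exp(sx+ty)$ and using that $\exp(sx+ty)$ is an eigenfunction of $\partial^2/\partial x\,\partial y$ with eigenvalue $st$ yields $\exp(sx+ty+stz)$, which by Proposition~\ref{triH:propa} is precisely the generating function of $H_{m,n}(x,y,z)$; equating coefficients of $s^m t^n$ then delivers the claim.
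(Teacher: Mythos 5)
Your proof is correct, and it is worth noting that it goes beyond what the paper itself does: the paper gives no proof of Proposition~\ref{triH:propc} at all, remarking only that the identity is equivalent to Eq.~(1.5.2d) of Dattoli et al.~\cite{DattoliOTTOVA1997}. Your direct computation supplies the self-contained verification that the paper delegates to the literature, and both of its delicate points are handled properly: the exponential series truncates on polynomials because $\partial^2/\partial x\,\partial y$ kills $x^{m-k}y^{n-k}$ once $k>m\land n$, and the coefficient identity $\frac{m!\,n!}{k!\,(m-k)!\,(n-k)!}=k!\binom{m}{k}\binom{n}{k}$ matches Definition~\ref{triHermitedefn} exactly. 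Your alternative argument via the generating function is also sound and fits the paper's own style (it is essentially Proposition~\ref{triH:propa} read backwards): since $e^{sx+ty}$ is an eigenfunction of $\partial^2/\partial x\,\partial y$ with eigenvalue $st$, the operator exponential acts as multiplication by $e^{stz}$, producing $\exp(sx+ty+stz)$; the only point you should acknowledge there is the interchange of the operator series with the double Taylor series when equating coefficients of $s^mt^n$, which is justified by absolute convergence but is a genuine (if routine) step. The finite direct computation avoids even that, and is the most economical complete proof.
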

This operational identity for the complex Hermite polynomials is equivalent to  \cite[Eq.(1.5.2d)]{DattoliOTTOVA1997}.
\begin{rem}\rm
Using the exponential operator
$\exp \left( -z\frac{\partial^2 }{\partial x \partial y} \right)$	
to act both sides of the equation in Proposition~\ref{triH:propc}, we
have
\begin{align}
x^my^n&=\exp \left( -z\frac{\partial^2 }{\partial x \partial y} \right)\{H_{m, n}(x, y)\}\\
&=\sum_{k=0}^{m\land n} k! {m\choose k} {n\choose k}
\exp \left( -z\frac{\partial^2 }{\partial x \partial y} \right)\{x^{m-k} y^{n-k}\} z^k\nonumber\\
&=\sum_{k=0}^{m\land n} k! {m\choose k} {n\choose k}
H_{m-k}(x, y, -z) H_{n-k}(x, y, -z) z^k.\nonumber
\end{align}	
\end{rem}
\begin{prop}\label{mcvarapp}
If $f(x_1, x_2, \ldots,  x_k)$ is analytic at the origin $(0, 0, \ldots,  0)\in \mathbb{C}^k$, then,
$f$ can be expanded in an absolutely and uniformly convergent power series,
 \[
 f(x_1, x_2, \ldots,  x_k)=\sum_{n_1, n_2, \ldots, n_k=0}^\infty \lambda_{n_1, n_2, \ldots, n_k}
 x_1^{n_1} x_2^{n_2}\cdots x_k^{n_k}.
 \]
 \end{prop}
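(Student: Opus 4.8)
The plan is to reduce this to the several-variable Cauchy integral formula together with the geometric-series expansion of its kernel. Since $f$ is analytic at the origin, there exist radii $r_1, \ldots, r_k > 0$ such that $f$ is holomorphic on a neighbourhood of the closed polydisc $\{(x_1, \ldots, x_k) : |x_j| \le r_j\}$ and is separately holomorphic in each variable. Fixing $\rho_j$ with $0 < \rho_j < r_j$ and iterating the one-variable Cauchy integral formula over the distinguished boundary (the torus $|w_j| = \rho_j$), I obtain, for every $(x_1, \ldots, x_k)$ with $|x_j| < \rho_j$,
\[
f(x_1, \ldots, x_k) = \frac{1}{(2\pi i)^k} \oint_{|w_1| = \rho_1} \cdots \oint_{|w_k| = \rho_k} \frac{f(w_1, \ldots, w_k)}{(w_1 - x_1) \cdots (w_k - x_k)} \, dw_1 \cdots dw_k.
\]

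Next I would expand each Cauchy kernel as a geometric series. Since $|x_j| < \rho_j = |w_j|$ on the torus, one has
\[
\frac{1}{w_j - x_j} = \sum_{n_j = 0}^\infty \frac{x_j^{n_j}}{w_j^{n_j+1}},
\]
and this series converges uniformly in $w_j$ on $|w_j| = \rho_j$. Substituting the product of these $k$ series into the Cauchy integral and interchanging the multiple summation with the integration produces the asserted expansion, with the coefficients given by
\[
\lambda_{n_1, \ldots, n_k} = \frac{1}{(2\pi i)^k} \oint_{|w_1| = \rho_1} \cdots \oint_{|w_k| = \rho_k} \frac{f(w_1, \ldots, w_k)}{w_1^{n_1+1} \cdots w_k^{n_k+1}} \, dw_1 \cdots dw_k,
\]
which is manifestly independent of $(x_1, \ldots, x_k)$, as required by the statement.

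To upgrade the convergence to absolute and uniform, I would invoke the Cauchy estimates that drop out of the formula above. Writing $M$ for the supremum of $|f|$ over the torus, the elementary bound on the contour integral gives $|\lambda_{n_1, \ldots, n_k}| \le M \rho_1^{-n_1} \cdots \rho_k^{-n_k}$. Consequently, on any sub-polydisc $|x_j| \le \theta \rho_j$ with fixed $\theta \in (0,1)$, each term is dominated by $M \theta^{n_1 + \cdots + n_k}$, whose total sum is the convergent geometric series $M (1-\theta)^{-k}$. The Weierstrass $M$-test then yields absolute and uniform convergence of the expansion on that sub-polydisc, completing the argument.

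I anticipate that the only genuinely delicate point is justifying the interchange of the $k$-fold sum with the $k$-fold integral; this is handled by observing that the partial products of the geometric series converge uniformly on the compact torus while $f$ is bounded there, so termwise integration is legitimate. The remaining ingredients — the iterated Cauchy formula and the Cauchy estimates — are standard one-variable facts applied one coordinate at a time, and I would simply cite or sketch them rather than reproduce the details.
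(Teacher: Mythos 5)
The paper offers no proof of this proposition at all: it is quoted as a standard fact with a pointer to Malgrange's textbook (p.~5, Proposition~1), and your argument --- iterated one-variable Cauchy formula over the distinguished boundary of a polydisc, geometric expansion of the kernel with termwise integration justified by uniform convergence on the compact torus, then Cauchy estimates and the Weierstrass $M$-test for absolute and uniform convergence on sub-polydiscs --- is exactly the standard proof underlying that reference, and it is correct. The only reading to keep in mind is that the uniform convergence you obtain (and all that can be claimed) is on compact sub-polydiscs $|x_j| \le \theta \rho_j$, $\theta < 1$, which is the intended meaning of the statement, and your use of joint continuity of $f$ when iterating the Cauchy formula is legitimate since analyticity at the origin already gives holomorphy, hence continuity, on a full neighbourhood.
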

This proposition can be found in the standard textbooks for complex analysis in several variables (see, for example \cite[p. 5, Proposition~ 1]{Malgrange}).

 Now we begin to prove Theorem~\ref{LiutriHermite:eqna} with the help of the above three propositions.
\begin{proof} The theorem can be proved  by mathematical induction. We first prove the theorem for the case $k=1.$
	
Since $f$ is analytic at $(0, 0, 0),$  we know that $f$ can be expanded in
an absolutely and uniformly convergent power series in a neighborhood of $(0, 0)$. Thus there exists a sequence $\{\lambda_{m, n, p}\}$
independent of $x_1, y_1$ and $z_1$ such that
\begin{equation}
 f(x_1, y_1, z_1)=\sum_{m, n, p=0}^\infty \lambda_{m, n, p} x_1^m y_1^n z_1^p.
\label{liu:eqn1}
\end{equation}
The series on the right-hand side of the equation above is absolutely and uniformly convergent.

Upon substituting the equation above into the following partial differential equation:

\[
\frac{\partial f}{\partial z_1}
=\frac{\partial^2 f}{\partial x_1 \partial y_1},
\]
and then using the identities, $D_{ z_1}\{z_1^p\}=p z_1^{p-1}$, in the resulting equation,   we obtain
\begin{equation*}
\sum_{m, n, p=0}^\infty p \lambda_{m, n, p} x_1^{m} y_1^{n} z_1^{p-1}
=\frac{\partial^2 }{\partial x_1 \partial y_1}\left\{\sum_{m, n, p=0}^\infty \lambda_{m, n, p}  x_1^m y_1^{n}z_1^p\right\}.
\end{equation*}
Upon equating the coefficients of $z_1^{p-1}$ on both sides of the  equation, we deduce that
\begin{align*}
p\sum_{m, n=0}^\infty \lambda_{m, n, p} x_1^m y_1^n
=\frac{\partial^2 }{\partial x_1 \partial y_1}\left\{\sum_{m, n=0}^\infty \lambda_{m, n,  p-1} x_1^m y_1^n\right\}.
\end{align*}
If we iterate this relation $(p-1)$ times and
interchange the order of differentiation and summation,  we deduce that
\begin{align*}
\sum_{m, n=0}^\infty \lambda_{m, n, p} x_1^m y_1^n
&=\frac{1}{p!}\frac{\partial^{2p} }{\partial x_1^p \partial y_1^p}\left\{\sum_{m, n=0}^\infty \lambda_{m, n,  0} x_1^m y_1^n\right\}\\
&=\frac{1}{p!}\sum_{m, n=0}^\infty \lambda_{m, n,  0}
\frac{\partial^{2p} }{\partial x_1^p \partial y_1^p} \{x_1^m y_1^n\}.
\end{align*}
Substituting this equation into (\ref{liu:eqn1}) and using a simple calculation, we conclude that
\begin{align*}
 f(x_1, y_1, z_1)&=\sum_{ p=0}^\infty z^p \sum_{m. n=0}^\infty \lambda_{m, n, p} x_1^m y_1^n\\
 &=\sum_{ p=0}^\infty \frac{z_1^p}{p!}\sum_{m, n=0}^\infty \lambda_{m, n, 0}
 \frac{\partial^{2p} }{\partial x_1^p \partial y_1^p} \{x_1^m y_1^n\} .
\end{align*}
Interchanging the order of summation and using Proposition~\ref{triH:propc}, we deduce that
\begin{align*}
 f(x_1, y_1, z_1)&=\sum_{m, n=0}^\infty \lambda_{m, n, 0} \exp \left(z_1 \frac{\partial^2 }{\partial x_1, \partial y_1}\right) \{x_1^m y_1^n\}\\
 &=\sum_{m, n=0}^\infty \lambda_{m, n, 0} H_{m, n}(x_1, y_1, z_1).
\end{align*}
This indicates that $f(x_1, y_1, z_1)$ can be expanded in terms of $H_{m, n}(x_1, y_1, z_1).$

Conversely, if $f(x_1, y_1, z_1)$ can be expanded in terms of $H_{m, n}(x_1, y_1, z_1)$, then,  using Proposition~\ref{triH:propb},  we find that
$f(x_1, y_1, z_1)$ satisfies the partial differential equation
\[
\frac{\partial f}{\partial z_1}
=\frac{\partial^2 f}{\partial  x_1 \partial  y_1}.
\]
This shows  that Theorem~\ref{LiutriHermite:eqna} holds for the case with $k=1$.

Now, we assume that the theorem is true for the case $k-1$ and consider the case $k$.
If we regard $f(x_1, y_1, z_1, \ldots, x_k, y_k, z_k)$ as a function of $x_1, y_1 $ and $z_1$,  then,  $f$ is analytic at $(0, 0, 0)$ and satisfies
the partial differential equation
\[
\frac{\partial f}{\partial z_1}
=\frac{\partial^2 f}{\partial  x_1 \partial  y_1}.
\]
Hence there exists a sequence
$\{c_{m_1, n_1}(x_2, y_2, z_2,  \ldots, x_k, y_k, z_k)\}$ independent of $x_1, y_1$ and $z_1$ such that
\begin{align}
&f(x_1, y_1, z_1, \ldots, x_k, y_k, z_k)\label{liu:eqn2}\\
&=\sum_{m_1, n_1=0}^\infty c_{m_1, n_1}(x_2, y_2, z_2,  \ldots, x_k, y_k, z_k)H_{m_1, n_1}(x_1, y_1, z_1). \nonumber
\end{align}
Setting $z_1=0$ in the equation and using the obvious equation $H_{m_1, n_1}(x_1, y_1, 0)=x_1^{m_1} y_1^{n_1},$  we obtain
\begin{align*}
&f(x_1, y_1, 0, \ldots, x_k, y_k, z_k)\\
&=\sum_{m_1, n_1=0}^\infty c_{m_1, n_1}(x_2, y_2, z_2,  \ldots, x_k, y_k, z_k)x_1^{m_1} y_1^{n_1}.
\end{align*}
Using the Maclaurin expansion for analytic functions of  two variables, we immediately deduce that
\begin{align*}
&c_{m_1, n_1}(x_2, y_2, z_2 \ldots, x_k, y_k, z_k)\\
&=\frac{\partial^{m_1+n_1} f(x_1, y_1,  0,   \ldots, x_k, y_k, z_k)}{m_1!  n_1! \partial {x_1}^{m_1} \partial {y_1}^{n_1}}\Big|_{x_1=y_1=0}.
\end{align*}
Since $f(x_1, y_1, z_1, \ldots, x_k, y_k, z_k)$ is analytic at $(0, \ldots, 0)\in \mathbb{C}^{2k},$  from
the above equation, we know that $c_{m_1, n_1}(x_2, y_2, z_2,\ldots, x_k, y_k, z_k)$ is analytic at
\[
 (x_2, y_2, z_2, \ldots, x_k, y_k, z_k)=(0, \ldots, 0)\in \mathbb{C}^{3k-3}.
\]
Substituting (\ref{liu:eqn2}) into the partial differential equations in Theorem~\ref{LiutriHermite:eqna}, we find that for $j=2, \ldots,  k,$
\begin{align*}
&\sum_{n_1=0}^\infty  \frac{\partial c_{m_1, n_1}(x_2, y_2, z_2, \ldots, x_k, y_k, z_k)} {\partial {z_j}}H_{m_1, n_1}(x_1, y_1, z_1)\\
&=\sum_{m_1, n_1=0}^\infty \frac{\partial ^2 c_{m_1, n_1}(x_2, y_2, z_2, \ldots, x_k, y_k, z_k)}{\partial {x_j} \partial {y_j}} H_{n_1}(x_1, y_1, z_1).
\end{align*}
By equating the coefficients of $H_{m_1, n_1}(x_1, y_1, z_1)$ in the above equation, we find that for $j=2, \ldots,  k,$
\begin{align*}
\frac{\partial c_{m_1, n_1}(x_2, y_2, z_2, \ldots, x_k, y_k, z_k)} {\partial {z_j}}
=\frac{\partial ^2c_{m_1, n_1}(x_2, y_2, z_2, \ldots, x_k, y_k, z_k)}{\partial {x_j}\partial {y_j}}.
\end{align*}
Thus by the inductive hypothesis, there exists a sequence $\lambda_{m_1, n_1, \ldots, m_k, n_k}$ independent of
$x_2, y_2, z_2, \ldots, x_k, y_k, z_k$ (of course independent of $x_1, y_1$ and $z_1$) such that
\begin{align*}
&c_{m_1, n_1}(x_2, y_2, z_2 \ldots, x_k, y_k, z_k)\\
&=\sum_{m_1, n_1, \ldots, m_k,  n_k=0}^\infty \lambda_{m_1, n_1, \ldots, m_k,  n_k}
H_{m_2, n_2}( x_2, y_2, z_2)\ldots H_{m_k, n_k}(x_k, y_k, z_k).
\end{align*}
Substituting this equation  into (\ref{liu:eqn2}),
we find that $f$ can be expanded into a  $k$-fold complex Hermite series. Conversely, if $f$ is a $k$-fold complex Hermite series, then it satisfies
the partial differential equations in Theorem~\ref{LiutriHermite:eqna} by using Proposition~\ref{triH:propb}. Hence we complete the proof of the theorem.
\end{proof}
To determine if a given function is an analytic functions in several complex variables,
we can use the following theorem due to Hartogs (see, for example, \cite[p. 28]{Taylor}).
\begin{thm}\label{hartogthm}
If a complex valued function $f(z_1, z_2, \ldots, z_n)$ is holomorphic (analytic) in each variable separately in a domain $U\in\mathbb{C}^n,$
then,  it is holomorphic (analytic) in $U.$
\end{thm}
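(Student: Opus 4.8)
The plan is to treat this as the classical theorem of Hartogs on separate analyticity, whose genuine difficulty lies entirely in the absence of any continuity or boundedness hypothesis. Accordingly I would split the argument into two parts: first, the comparatively routine fact that a separately holomorphic function which is in addition locally bounded is jointly holomorphic (Osgood's lemma); and second, the substantive claim that separate holomorphy \emph{alone} forces local boundedness, which then reduces the general statement to the first part. I would also note at the outset that the essential difficulty already appears when $n=2$, and that the passage from $n=2$ to arbitrary $n$ is handled by freezing variables and inducting on the dimension.

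For the bounded case, fix a closed polydisc $\overline{\Delta}=\overline{\Delta}_1\times\cdots\times\overline{\Delta}_n$ compactly contained in $U$. Holding all but the $j$-th variable fixed, $f$ is holomorphic in $z_j$, so I would apply the one-variable Cauchy integral formula successively in $z_1,\ldots,z_n$. Since $f$ is separately holomorphic it is in particular separately continuous, and together with the assumed local boundedness this lets Fubini's theorem justify collapsing the iterated integrals into the single integral
\[
f(z)=\frac{1}{(2\pi i)^n}\int_{\partial\Delta_1}\cdots\int_{\partial\Delta_n}\frac{f(\zeta_1,\ldots,\zeta_n)}{(\zeta_1-z_1)\cdots(\zeta_n-z_n)}\,d\zeta_n\cdots d\zeta_1
\]
over the distinguished boundary. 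Expanding each kernel factor $(\zeta_j-z_j)^{-1}$ in a geometric series and integrating term by term then yields an absolutely and uniformly convergent multiple power series for $f$ near each interior point, which is precisely joint analyticity.

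For the core step I would reduce to $n=2$ and write $f(z_1,z_2)=\sum_{k\ge0}a_k(z_1)z_2^k$ for the Taylor expansion in $z_2$. The Cauchy coefficient formula $a_k(z_1)=\frac{1}{2\pi i}\int f(z_1,\zeta)\zeta^{-k-1}\,d\zeta$, combined with holomorphy of $f$ in $z_1$, shows that each $a_k$ is holomorphic in $z_1$; hence $u_k:=\frac{1}{k}\log|a_k|$ is subharmonic. These $u_k$ are locally bounded above, and for each fixed $z_1$ the radius of convergence of the $z_2$-series gives $\limsup_k u_k(z_1)\le c$ for a suitable constant $c$. The decisive tool is Hartogs' lemma on subharmonic functions, which promotes this pointwise $\limsup$ bound to a uniform estimate $u_k\le c+\varepsilon$ on compact subsets for all large $k$, i.e. $|a_k(z_1)|\le\rho^{-k}$ uniformly. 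This forces the double series for $f$ to converge uniformly on a fixed smaller bidisc, so $f$ is locally bounded there and Osgood's lemma from the previous step applies.

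The main obstacle is squarely this last step: extracting a uniform estimate from purely pointwise information. It rests on potential theory — the subharmonicity of $\log|a_k|$ and Hartogs' lemma — and typically also on a Baire-category argument to secure the initial local upper bound on the $u_k$ starting only from separate continuity. This is precisely why the statement is quoted from the literature rather than proved in place; everything else amounts to bookkeeping around the iterated Cauchy integral formula.
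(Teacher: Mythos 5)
The paper offers no proof of this statement at all: it is quoted as Hartogs' theorem with a pointer to the literature (Taylor's book, p.~28), and it is used purely as an imported tool for verifying joint analyticity elsewhere in the paper. So there is no internal argument to compare yours against; your sketch has to be measured against the classical proof, and in outline it is exactly that — the Osgood lemma for the locally bounded case via the iterated Cauchy formula on a polydisc, a Baire-category argument to manufacture an initial bound, subharmonicity of $u_k=\frac{1}{k}\log|a_k|$, and Hartogs' lemma on subharmonic functions to convert the pointwise $\limsup$ bound coming from the radius of convergence into a locally uniform one. This is the standard architecture (H\"ormander's Theorem~2.2.8, or Krantz), and you correctly locate the genuine difficulty in the uniformization step. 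Your closing observation that this is why the paper cites rather than proves the theorem is accurate.

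One step is stated in an order that would not survive scrutiny. You claim each $a_k(z_1)=\frac{1}{2\pi i}\int f(z_1,\zeta)\zeta^{-k-1}\,d\zeta$ is holomorphic ``combined with holomorphy of $f$ in $z_1$'' — but justifying Morera plus Fubini for that contour integral requires $f$ to be bounded (or at least jointly measurable and dominated) on $K\times\partial\Delta_2$, which is precisely what you do not yet have at that stage; separate holomorphy alone does not let you differentiate under the integral over the \emph{large} circle. The standard repair is to run Baire first: the closed sets $E_M=\{z_2:|f(z_1,z_2)|\le M\ \forall z_1\in K\}$ cover $\Delta_2$, so some $E_M$ contains a disc $D(c,\delta)$, whence Osgood gives joint holomorphy on $\Delta_1\times D(c,\delta)$; one then defines $a_k(z_1)=\frac{1}{k!}\,\frac{\partial^k f}{\partial z_2^k}(z_1,c)$, which is manifestly holomorphic on all of $\Delta_1$, while for each fixed $z_1$ the Taylor series converges on the full disc, giving the pointwise $\limsup$ estimate that feeds Hartogs' lemma. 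A second, minor point: if you induct on $n$ rather than genuinely reducing to $n=2$, the coefficients $a_k$ depend on $n-1$ variables, so you need $\log|a_k|$ plurisubharmonic (hence subharmonic on $\mathbb{R}^{2(n-1)}$) for Hartogs' lemma to apply. With these two adjustments your proposal is the correct classical proof.
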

\section{The Poisson Kernel for the  complex Hermite polynomials }
In this section we will use Theorem~\ref{LiutriHermite:eqna} to give a completely new proof of  the following Poisson kernel for the complex Hermite polynomials.
This formula was first derived by Carlitz \cite[p.13]{carlitz1978} in 1978, and rediscovered by
W\"{u}nsche \cite{Wunsche1999} without proof in 1999.
Ismail \cite[Theorem~3.3]{Ismail2016} recovered it
as a specific case of his Kibble--Slepian formula. For other different proofs, please see \cite[Theorem~4.1]{Ghanmi2017}, \cite{IsmailZhang}. Our proof is brand new.
\begin{thm}\label{mehlerthm} For $|stz_1z_2|<1,$ the Mehler formula for the  complex Hermite polynomials states that
\begin{align*}
&\sum_{m, n=0}^\infty \frac{H_{m, n}(x_1, y_1, z_1)H_{m, n}(x_2, y_2, z_2)}{m! n!} {s^mt^n}\\
&=\frac{1}{1-stz_1z_2}
\exp \left(\frac{sx_1x_2+ty_1y_2+(z_1x_2y_2+z_2x_1y_1)st}{1-stz_1z_2}\right).
\end{align*}
\end{thm}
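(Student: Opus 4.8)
The plan is to apply the expansion theorem (Theorem~\ref{LiutriHermite:eqna}) with $k=2$ to the right-hand side, which I denote by $F$. Treating $s,t$ as fixed parameters, I set $D=1-stz_1z_2$ and $N=sx_1x_2+ty_1y_2+(z_1x_2y_2+z_2x_1y_1)st$, so that $F=D^{-1}\exp(N/D)$. Since $D=1$ at the origin, $F$ is analytic in a neighborhood of $(0,\ldots,0)\in\mathbb{C}^6$ (indeed wherever $D\neq0$, in particular for $|stz_1z_2|<1$). To invoke the theorem it then suffices to verify the two partial differential equations $\partial F/\partial z_j=\partial^2F/\partial x_j\partial y_j$ for $j=1,2$. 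This verification is the only genuine obstacle; the remaining steps are bookkeeping.

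Next I would carry out the verification for $j=1$. Writing $g=N/D$ and using $F=D^{-1}e^{g}$, I compute $\partial_{x_1}g=s(x_2+z_2ty_1)/D$, $\partial_{y_1}g=t(y_2+z_2sx_1)/D$, and $\partial^2_{x_1y_1}g=stz_2/D$, which by the product rule give $\partial^2_{x_1y_1}F=(F/D^2)[\,st(y_2+z_2sx_1)(x_2+z_2ty_1)+stz_2D\,]$. On the other side, using $\partial_{z_1}D=-stz_2$ and $\partial_{z_1}N=stx_2y_2$, I obtain $\partial_{z_1}F=(stF/D^2)[\,(z_2+x_2y_2)D+z_2N\,]$. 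Expanding both brackets, the mixed terms $\pm z_1z_2st\,x_2y_2$ cancel and the two expressions coincide, establishing the PDE for $j=1$. Since $F$ is invariant under the simultaneous interchange $(x_1,y_1,z_1)\leftrightarrow(x_2,y_2,z_2)$ (both $D$ and $N$ are symmetric), the PDE for $j=2$ follows immediately.

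By Theorem~\ref{LiutriHermite:eqna}, $F$ then admits an absolutely and uniformly convergent $2$-fold complex Hermite expansion $F=\sum\lambda_{m_1,n_1,m_2,n_2}\,H_{m_1,n_1}(x_1,y_1,z_1)H_{m_2,n_2}(x_2,y_2,z_2)$. To determine the coefficients I would set $z_1=z_2=0$ and use $H_{m,n}(x,y,0)=x^{m}y^{n}$, so the expansion collapses to the power series $\sum\lambda_{m_1,n_1,m_2,n_2}\,x_1^{m_1}y_1^{n_1}x_2^{m_2}y_2^{n_2}$. On the other hand $F|_{z_1=z_2=0}=\exp(sx_1x_2+ty_1y_2)=\sum_{m,n}\frac{s^{m}t^{n}}{m!\,n!}x_1^{m}x_2^{m}y_1^{n}y_2^{n}$. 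Comparing coefficients of like monomials forces $\lambda_{m_1,n_1,m_2,n_2}$ to vanish unless $m_1=m_2=m$ and $n_1=n_2=n$, in which case $\lambda=s^{m}t^{n}/(m!\,n!)$. Substituting these coefficients back into the expansion of $F$ reproduces exactly the left-hand side, which completes the proof.
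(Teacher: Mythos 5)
Your proof is correct, and it takes a genuinely different instantiation of the same master theorem than the paper does. Your computations check out: both brackets in the $j=1$ verification expand to $z_2-stz_1z_2^2+x_2y_2+sz_2x_1x_2+tz_2y_1y_2+stz_2^2x_1y_1$, the symmetry of $D$ and $N$ under $(x_1,y_1,z_1)\leftrightarrow(x_2,y_2,z_2)$ legitimately dispatches $j=2$, and the Kronecker-delta collapse at $z_1=z_2=0$ is sound. The difference: you apply Theorem~\ref{LiutriHermite:eqna} with $k=2$, expanding $F$ in the full product basis $H_{m_1,n_1}(x_1,y_1,z_1)H_{m_2,n_2}(x_2,y_2,z_2)$ with scalar coefficients $\lambda_{m_1,n_1,m_2,n_2}$, whereas the paper applies it with $k=1$, treating only $(x_1,y_1,z_1)$ as variables and allowing the coefficients $\lambda_{m,n}$ to depend on $x_2,y_2,z_2,s,t$. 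The paper's route is more economical: a single PDE needs checking, and after setting $z_1=0$ the function $\exp(sx_1x_2+ty_1y_2+stz_2x_1y_1)$ is recognized via the generating function of Proposition~\ref{triH:propa} as $\sum_{m,n}H_{m,n}(x_2,y_2,z_2)(sx_1)^m(ty_1)^n/(m!\,n!)$, handing over $\lambda_{m,n}$ in one stroke. Your route buys a different kind of cleanliness: it is manifestly symmetric in the two triples, it needs nothing beyond the elementary expansion of $\exp(sx_1x_2+ty_1y_2)$ at the coefficient stage (Proposition~\ref{triH:propa} is never invoked), and the diagonal structure $m_1=m_2$, $n_1=n_2$ of the Mehler kernel emerges automatically from coefficient matching rather than being read off a known generating function; the costs are the second PDE (cheap, by symmetry) and reliance on the genuinely multivariable case of the expansion theorem where the paper only needs its base case $k=1$. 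One shared loose end, not a gap specific to you: the expansion theorem operates at the origin, so strictly speaking the identity on the full region $|stz_1z_2|<1$ is completed by analytic continuation, a step the paper also leaves implicit.
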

\begin{proof} If we use $f(x_1, y_1, z_1)$ to denote the right-hand side of the equation in
Theorem~\ref{mehlerthm}, then, it is easily seen that $f(x_1, y_1, z_1)$ is an analytic function of $x_1, y_1, z_1$ for any $x_1, y_1$ and $|stz_1z_2|<1.$ Hence $f(x_1, y_1, z_1)$ is analytic at $(x_1, y_1, z_1)=(0, 0, 0).$
By a direct computation, we find that
\[
\frac{\partial f}{\partial z_1}=\frac{\partial^2 f}{\partial x_1 \partial y_1}
=\left(\frac{z_2st}{(1-stz_1z_2)^2}+\frac{st(x_2+y_1z_2)(y_2+x_1z_2)}{(1-stz_1z_2)^2} \right)f.
\]
Thus, by Theorem~\ref{LiutriHermite:eqna},  there exists a sequence $\{\lambda_{m, n}\}$ independent of $x_1, y_1$ and $z_1$ such that
\begin{align}
&\frac{1}{1-stz_1z_2}
\exp \left(\frac{sx_1x_2+ty_1y_2+(z_1x_2y_2+z_2x_1y_1)st}{1-stz_1z_2}\right)\label{mehler:eqn1}\\
&=\sum_{m, n=0}^{\infty} \lambda_{m, n} H_{m, n}(x_1, y_1, z_1).\nonumber
\end{align}
Setting $z_1=0$ in this equation and using $H_{m, n}(x_1, y_1,  0)=x_1^m y_1^n$ in the resulting equation,  we immediately find that
\[
\exp (sx_1x_2+ty_1y_2+x_1y_1z_2st)
=\sum_{m, n=0}^{\infty} \lambda_{m, n} x_1^m y_1^n.
\]
Using the generating function for the  complex Hermite polynomials in Proposition~\ref{triH:propa}, we have
\[
\exp (sx_1x_2+ty_1y_2+x_1y_1z_2st)=\sum_{m, n=0}^\infty
\frac{H_{m, n}(x_2, y_2, z_2)}{m! n!} {(sx_1)^{m}(ty_1)^n}.
\]
Comparing the right-hand sides of these two equations, we conclude that
\[
\lambda_{m, n}=\frac{H_{m, n}(x_2, y_2, z_2)}{m! n!}s^m t^n.
\]
Substituting this into (\ref{mehler:eqn1}), we complete the proof of Theorem~\ref{mehlerthm}.
\end{proof}

Using Proposition~\ref{triH:propc},  we easily find that the Poisson kernel for the complex Hermite polynomials is equivalent to the following exponential operational identity, which is equivalent to \cite[Equation (5.1)]{Wunsche2015}.
\begin{thm}\label{Mehleroperator} For $|stz_1z_2|<1,$ we have the exponential operator identity
\begin{align*}
&\exp\left( z_2 \frac{\partial^2}{\partial x_2 \partial y_2}\right)
\left\{ \exp (sx_1x_2+ty_1y_2+ty_1y_2+stz_1x_2y_2) \right\}\\
&=\frac{1}{1-stz_1z_2}
\exp \left(\frac{sx_1x_2+ty_1y_2+(z_1x_2y_2+z_2x_1y_1)st}{1-stz_1z_2}\right).
\end{align*}
\end{thm}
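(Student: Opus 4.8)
The plan is to deduce this operational identity directly from the Poisson kernel of Theorem~\ref{mehlerthm}, using the operational representation of Proposition~\ref{triH:propc} to convert the polynomial factor $H_{m,n}(x_2,y_2,z_2)$ into the action of $\exp(z_2\,\partial^2/\partial x_2\partial y_2)$ on a monomial. Concretely, I would begin with the left-hand side of the Poisson kernel,
\[
\sum_{m,n=0}^\infty \frac{H_{m,n}(x_1,y_1,z_1)H_{m,n}(x_2,y_2,z_2)}{m!\,n!}\,s^m t^n,
\]
and replace each factor $H_{m,n}(x_2,y_2,z_2)$ by $\exp(z_2\,\partial^2/\partial x_2\partial y_2)\{x_2^m y_2^n\}$ according to Proposition~\ref{triH:propc}.

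Since this operator acts only on the variables $x_2,y_2$ and is linear, I would next pull it outside the double summation, obtaining
\[
\exp\left(z_2\frac{\partial^2}{\partial x_2\partial y_2}\right)\left\{\sum_{m,n=0}^\infty \frac{H_{m,n}(x_1,y_1,z_1)}{m!\,n!}(sx_2)^m(ty_2)^n\right\}.
\]
The inner series is precisely the generating function of Proposition~\ref{triH:propa} with $s$ and $t$ replaced by $sx_2$ and $ty_2$; evaluating it yields $\exp(sx_1x_2+ty_1y_2+stz_1x_2y_2)$, which is exactly the function to which the operator is applied on the left-hand side of Theorem~\ref{Mehleroperator}. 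Comparing with the right-hand side of Theorem~\ref{mehlerthm} then completes the identification, so the two sides of the desired identity are equal.

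The one step requiring genuine care is the interchange of the infinite-order differential operator $\exp(z_2\,\partial^2/\partial x_2\partial y_2)$ with the infinite double sum. I would justify this by noting that for $|stz_1z_2|<1$ the series converges absolutely and uniformly on compact subsets (as guaranteed by the convergence statement in Theorem~\ref{mehlerthm} and the analyticity underlying Theorem~\ref{LiutriHermite:eqna}), so term-by-term differentiation of every order is legitimate and the operator, being a convergent series of such differentiations, may be applied termwise. This is the main obstacle; once it is dispatched, the remaining manipulations are the routine substitutions described above.
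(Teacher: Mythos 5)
Your proposal is correct and follows essentially the same route as the paper, which presents Theorem~\ref{Mehleroperator} precisely as the translation of the Poisson kernel (Theorem~\ref{mehlerthm}) through the operational representation of Proposition~\ref{triH:propc}, with the operator pulled outside the sum and the inner series evaluated by Proposition~\ref{triH:propa}. Your explicit justification of the term-by-term application of $\exp\left(z_2\frac{\partial^2}{\partial x_2\partial y_2}\right)$ simply fills in a detail the paper leaves implicit, and you correctly read the duplicated term $ty_1y_2$ in the stated left-hand side as a typographical repetition of the single exponent $sx_1x_2+ty_1y_2+stz_1x_2y_2$.
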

\section{The Nielsen type formulas for the complex Hermite polynomials}
\noindent  We begin this section with the following formula for the  complex  Hermite polynomials.
\begin{thm}\label{Nielsen:thma} For any complex numbers $x, y, z, s_1, s_2, t_1$ and $t_2$, we have
\begin{align*}
&\exp\left((s_1+s_2)x+(t_1+t_2)y+(s_1+s_2)(t_1+t_2)z\right)\\
&=\sum_{m_1, n_1, m_2, n_2=0}^\infty H_{m_1+m_2, n_1+n_2} (x, y, z)
\frac{s_1^{m_1} s_2^{m_2}t_1^{n_1}t_2^{n_2}}{m_1! m_2! n_1! n_2!}.
\end{align*}
\end{thm}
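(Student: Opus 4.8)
The plan is to derive this identity directly from the generating function of Proposition~\ref{triH:propa} together with the binomial theorem, with no need to invoke the full expansion theorem. First I would specialize Proposition~\ref{triH:propa} to $s=s_1+s_2$ and $t=t_1+t_2$, which at once gives
\[
\exp\left((s_1+s_2)x+(t_1+t_2)y+(s_1+s_2)(t_1+t_2)z\right)=\sum_{m, n=0}^{\infty} H_{m, n}(x, y, z)\frac{(s_1+s_2)^m (t_1+t_2)^n}{m! n!}.
\]
This already expresses the left-hand side of the theorem as a single complex Hermite series; what remains is the purely combinatorial task of splitting the powers $(s_1+s_2)^m$ and $(t_1+t_2)^n$ into monomials in the four variables.

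Next I would insert the normalized binomial identities $\frac{(s_1+s_2)^m}{m!}=\sum_{m_1+m_2=m}\frac{s_1^{m_1} s_2^{m_2}}{m_1! m_2!}$ and $\frac{(t_1+t_2)^n}{n!}=\sum_{n_1+n_2=n}\frac{t_1^{n_1} t_2^{n_2}}{n_1! n_2!}$ into the right-hand side above. This turns the coefficient $\frac{(s_1+s_2)^m (t_1+t_2)^n}{m! n!}$ into the product of the two constrained sums, so that the series becomes
\[
\sum_{m, n=0}^{\infty} H_{m, n}(x, y, z)\sum_{m_1+m_2=m}\sum_{n_1+n_2=n}\frac{s_1^{m_1} s_2^{m_2} t_1^{n_1} t_2^{n_2}}{m_1! m_2! n_1! n_2!}.
\]

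Finally I would reindex. As $m$ and $n$ run over all non-negative integers and the inner sums run over all compositions $m=m_1+m_2$ and $n=n_1+n_2$, the two outer sums together with the two inner constrained sums collapse into a single free quadruple sum over $m_1,m_2,n_1,n_2\ge 0$, with $m_1+m_2$ and $n_1+n_2$ now appearing as the subscripts of the Hermite polynomial. This produces exactly
\[
\sum_{m_1, m_2, n_1, n_2=0}^\infty H_{m_1+m_2, n_1+n_2}(x,y,z)\frac{s_1^{m_1} s_2^{m_2} t_1^{n_1} t_2^{n_2}}{m_1! m_2! n_1! n_2!},
\]
which is the asserted right-hand side, completing the argument.

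I do not expect a genuine obstacle: the only delicate point is the legitimacy of reordering the terms, but every series in sight is a rearrangement of the absolutely convergent power series for the exponential, so all interchanges are valid for arbitrary complex $x,y,z,s_1,s_2,t_1,t_2$. As an alternative one could route the argument through Theorem~\ref{LiutriHermite:eqna}: writing $f$ for the left-hand side, a direct computation gives $\partial f/\partial z=\partial^2 f/\partial x\partial y$ (both sides equal $(s_1+s_2)(t_1+t_2)f$), so $f=\sum_{m,n}\lambda_{m,n}H_{m,n}(x,y,z)$; setting $z=0$ and using $H_{m,n}(x,y,0)=x^m y^n$ then pins down $\lambda_{m,n}=(s_1+s_2)^m(t_1+t_2)^n/(m!\,n!)$, after which the same binomial splitting finishes the proof.
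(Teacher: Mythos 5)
Your direct argument is correct, and it is genuinely different from the paper's proof. The paper proves this theorem as a showcase of Theorem~\ref{LiutriHermite:eqna}: it verifies that $f=\exp((s_1+s_2)x+(t_1+t_2)y+(s_1+s_2)(t_1+t_2)z)$ is analytic at the origin and satisfies $\partial f/\partial z=\partial^2 f/\partial x\,\partial y=(s_1+s_2)(t_1+t_2)f$, invokes the expansion theorem to get $f=\sum_{k,l}\lambda_{k,l}H_{k,l}(x,y,z)$, pins down $k!\,l!\,\lambda_{k,l}=(s_1+s_2)^k(t_1+t_2)^l$ by setting $z=0$, and only then does the same binomial splitting and reindexing you perform. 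So your ``alternative'' sketch at the end is in fact the paper's proof verbatim, while your main route bypasses the PDE machinery entirely: you specialize Proposition~\ref{triH:propa} at $s=s_1+s_2$, $t=t_1+t_2$ and go straight to the binomial decomposition. Your route is shorter and more elementary, needing only the generating function; the paper's route costs a little more (checking analyticity and the PDE) but is deliberately chosen to illustrate the systematic method the paper is advertising. Your justification of the rearrangement is also sound, and can be made fully airtight in one line: since all coefficients of $H_{m,n}(x,y,z)$ are positive here (there is no $(-1)^k$ in Definition~\ref{triHermitedefn}), one has $|H_{m,n}(x,y,z)|\le H_{m,n}(|x|,|y|,|z|)$, and the comparison series
\[
\sum_{m,n=0}^{\infty}H_{m,n}(|x|,|y|,|z|)\,\frac{(|s_1|+|s_2|)^m(|t_1|+|t_2|)^n}{m!\,n!}
=\exp\bigl((|s_1|+|s_2|)|x|+\cdots\bigr)
\]
converges, so the quadruple series converges absolutely and all reorderings are legitimate for arbitrary complex parameters.
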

\begin{proof} Denote the left-hand side of the equation in Theorem~\ref{Nielsen:thma} by $f(x, y, z)$. It is easily seen that
$f(x, y, z)$ is analytic at $(0, 0, 0)$. A simple computation shows that
\[
\frac{\partial f}{\partial z}
=\frac{\partial^2 f}{\partial  x \partial  y}
=(s_1+s_2)(t_1+t_2) f(x, y, z).
\]
Thus, by Theorem~\ref{LiutriHermite:eqna},  there exists a sequence $\{\lambda_{k, l}\}$ independent of $x, y$ and $z$ such that
\begin{align}
&\exp\left((s_1+s_2)x+(t_1+t_2)y+(s_1+s_2)(t_1+t_2)z\right)\label{Tnielsen:eqn1}\\
&=\sum_{k_1, l=0}^\infty \lambda_{k, l} H_{k, l}(x, y, z).\nonumber
\end{align}
Upon setting $z=0$ in the equation and using $H_{k, l}(x, y, 0)=x^ky^l,$
we deduce that
\[
\exp\left((s_1+s_2)x+(t_1+t_2)y\right)
=\sum_{k_1, l=0}^\infty \lambda_{k, l} x^k y^l.
\]
Equating the coefficients of $x^ky^l$ on both sides of this equation, we find that $k! l! \lambda_{k, l}=(s_1+s_2)^k(t_1+t_2)^l.$  Substituting this into the right-hand side of (\ref{Tnielsen:eqn1}), expanding $(s_1+s_2)^k (t_1+t_2)^l$ using the binomial theorem and interchanging the order of summation, we complete the proof of Theorem~\ref{Nielsen:thma}.
\end{proof}	
 Using Theorem~\ref{Nielsen:thma} and method of equating the coefficients of like power, we can derive the following Nielsen type formula for the  complex Hermite polynomials, which is equivalent to \cite[Equation (3.11)]{Ghanmi2013} and \cite[Equation (4.7)]{Ismail2016}.
\begin{thm}\label{Nielsen:thmb} For any non-negative integers $m_j, n_j, p_j$
$j\in\{1, 2\}$, we have	
\begin{align*}
&\frac{H_{m_1+m_2, n_1+n_2}(x, y, z)}{m_1! m_2! n_1! n_2!}\\
&=\sum_{p_1=0}^{m_1\land n_2}\sum_{p_2=0}^{n_1\land m_2}
\frac{H_{m_1-p_1, n_1-p_2}(x, y, z)H_{m_2-p_2, n_2-p_1}(x, y, z) z^{p_1+p_2}}{p_1!p_2!(m_1-p_1)!(m_2-p_2)!(n_1-p_2)!(n_2-p_1)!}.
\end{align*}
\end{thm}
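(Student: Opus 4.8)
The plan is to read off this identity simply by comparing, on the two sides of Theorem~\ref{Nielsen:thma}, the coefficients of the monomial $s_1^{m_1}s_2^{m_2}t_1^{n_1}t_2^{n_2}$. The right-hand side of that theorem already exhibits $H_{m_1+m_2,n_1+n_2}(x,y,z)/(m_1!m_2!n_1!n_2!)$ as exactly this coefficient, so the entire argument reduces to expanding the left-hand exponential a second, different way and collecting the same coefficient.

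First I would split the exponent appearing in Theorem~\ref{Nielsen:thma} as
\[
(s_1+s_2)x+(t_1+t_2)y+(s_1+s_2)(t_1+t_2)z
=(s_1x+t_1y+s_1t_1z)+(s_2x+t_2y+s_2t_2z)+s_1t_2z+s_2t_1z,
\]
so that the left-hand side factors into a product of four exponentials:
\[
\exp(s_1x+t_1y+s_1t_1z)\,\exp(s_2x+t_2y+s_2t_2z)\,\exp(s_1t_2z)\,\exp(s_2t_1z).
\]
Next I would apply the generating function of Proposition~\ref{triH:propa} to the first two factors and the plain Maclaurin series of the exponential to the last two, producing four independent sums. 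Multiplying them out, the coefficient of $s_1^{m_1}s_2^{m_2}t_1^{n_1}t_2^{n_2}$ acquires one summation index $p_1$ from $\exp(s_1t_2z)$ and one index $p_2$ from $\exp(s_2t_1z)$; matching the powers of $s_1,s_2,t_1,t_2$ then forces the first Hermite factor to be $H_{m_1-p_1,\,n_1-p_2}(x,y,z)$, the second to be $H_{m_2-p_2,\,n_2-p_1}(x,y,z)$, accompanied by $z^{p_1+p_2}/(p_1!p_2!)$ together with the reciprocal factorials $(m_1-p_1)!(n_1-p_2)!(m_2-p_2)!(n_2-p_1)!$ displayed in the statement. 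Equating this with the coefficient supplied by the right-hand side of Theorem~\ref{Nielsen:thma} gives the claimed formula after one multiplies across by $m_1!m_2!n_1!n_2!$.

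The single point requiring care — and the main, if modest, obstacle — is fixing the ranges of $p_1$ and $p_2$. Nonnegativity of all four lower Hermite indices $m_1-p_1,\ n_1-p_2,\ m_2-p_2,\ n_2-p_1$ forces $p_1\le m_1\land n_2$ and $p_2\le n_1\land m_2$, which is exactly the double sum in Theorem~\ref{Nielsen:thmb}; I would check these four inequalities explicitly to guarantee that no spurious terms are introduced and none are lost. Since each factor is entire in the formal variables $s_1,s_2,t_1,t_2$ for fixed $x,y,z$, all four series converge absolutely, the product may be rearranged freely into a single multiple power series, and the term-by-term extraction of coefficients is fully justified.
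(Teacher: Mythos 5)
Your proof is correct and follows exactly the route the paper indicates for this theorem: expanding the left-hand side of Theorem~\ref{Nielsen:thma} as the product $\exp(s_1x+t_1y+s_1t_1z)\exp(s_2x+t_2y+s_2t_2z)\exp(s_1t_2z)\exp(s_2t_1z)$, applying Proposition~\ref{triH:propa} to the first two factors and the Maclaurin series to the cross terms, and equating coefficients of $s_1^{m_1}s_2^{m_2}t_1^{n_1}t_2^{n_2}$. The paper gives this argument only as a one-line remark (``using Theorem~\ref{Nielsen:thma} and the method of equating the coefficients of like power''), so your write-up merely supplies the details, including the correct summation ranges, that the author leaves implicit.
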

Upon multiplying both sides of the equation in Theorem~\ref{Nielsen:thma} by
$\exp(-s_1t_2-s_2t_1)z$ and then equating the coefficients of like power, we  can also derive the following formula due to Ismail \cite[Theorem~4.1]{Ismail2016}.
\begin{thm}\label{Nielsen:thmc} For any non-negative integers $m_j, n_j, p_j$
	$j\in\{1, 2\}$, we have	
	\begin{align*}
	&\frac{H_{m_1, n_1}(x, y, z)H_{m_2, n_2}(x, y, z)}{m_1! m_2! n_1! n_2!}\\
	&=\sum_{p_1=0}^{m_1\land n_2}\sum_{p_2=0}^{n_1\land m_2}
	\frac{H_{m_1+m_2-p_1-p_2, n_1+n_2-p_1-p_2}(x, y, z) (-z)^{p_1+p_2}}{p_1!p_2!(m_1-p_1)!(m_2-p_2)!(n_1-p_1)!(n_2-p_2)!}.
	\end{align*}
\end{thm}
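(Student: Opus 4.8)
The plan is to follow the route signalled just before the statement: multiply the bilinear generating relation of Theorem~\ref{Nielsen:thma} by the correcting factor $\exp\!\left(-(s_1t_2+s_2t_1)z\right)$ and then compare the coefficients of a single monomial $s_1^{m_1}s_2^{m_2}t_1^{n_1}t_2^{n_2}$ on the two sides.

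First I would simplify the left-hand side. After the multiplication the exponent appearing in Theorem~\ref{Nielsen:thma} becomes
\[
(s_1+s_2)x+(t_1+t_2)y+\bigl[(s_1+s_2)(t_1+t_2)-s_1t_2-s_2t_1\bigr]z,
\]
and since $(s_1+s_2)(t_1+t_2)-s_1t_2-s_2t_1=s_1t_1+s_2t_2$, the exponent splits cleanly as $(s_1x+t_1y+s_1t_1z)+(s_2x+t_2y+s_2t_2z)$. Hence the exponential factors into a product of two instances of the generating function of Proposition~\ref{triH:propa}, one in the pair $(s_1,t_1)$ and one in $(s_2,t_2)$, giving
\[
\sum_{m_1,n_1,m_2,n_2=0}^\infty
\frac{H_{m_1,n_1}(x,y,z)\,H_{m_2,n_2}(x,y,z)}{m_1!\,n_1!\,m_2!\,n_2!}\,
s_1^{m_1}t_1^{n_1}s_2^{m_2}t_2^{n_2}.
\]
This already displays the left-hand side of Theorem~\ref{Nielsen:thmc} as the coefficient of $s_1^{m_1}t_1^{n_1}s_2^{m_2}t_2^{n_2}$.

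Next I would expand the right-hand side. Writing $\exp\!\left(-(s_1t_2+s_2t_1)z\right)$ as the product of the two Maclaurin series $\sum_{p_1\ge0}(-s_1t_2z)^{p_1}/p_1!$ and $\sum_{p_2\ge0}(-s_2t_1z)^{p_2}/p_2!$, and multiplying term by term against the quadruple Hermite sum of Theorem~\ref{Nielsen:thma}, the coefficient of $s_1^{m_1}s_2^{m_2}t_1^{n_1}t_2^{n_2}$ forces the index substitution $m_1'=m_1-p_1$, $m_2'=m_2-p_2$, $n_1'=n_1-p_2$, $n_2'=n_2-p_1$ in the summation variables of that theorem. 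The surviving Hermite polynomial then carries the subscript $(m_1'+m_2',\,n_1'+n_2')=(m_1+m_2-p_1-p_2,\,n_1+n_2-p_1-p_2)$, it is weighted by $(-z)^{p_1+p_2}$, and the denominator is $p_1!\,p_2!\,m_1'!\,m_2'!\,n_1'!\,n_2'!$. Equating the two coefficients and summing over the admissible $p_1,p_2$ yields the asserted identity, the ranges $0\le p_1\le m_1\land n_2$ and $0\le p_2\le n_1\land m_2$ being exactly the conditions $m_1',m_2',n_1',n_2'\ge0$.

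The computation is otherwise routine; the one place demanding care is the bookkeeping of which translation index ($p_1$ or $p_2$) shifts which of the four exponents, since in the correcting factor $s_1$ is paired with $t_2$ and $s_2$ with $t_1$ rather than diagonally. I would therefore extract the coefficient slowly to keep this cross-pairing straight, as it is the natural place for an index or sign slip. All the rearrangements are legitimate because every series in play is the Maclaurin expansion of an entire function of $s_1,s_2,t_1,t_2$ and hence converges absolutely, so the Cauchy products and the interchange of summation order are justified.
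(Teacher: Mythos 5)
Your proof is correct and is exactly the argument the paper intends: the paper gives only the one-sentence sketch preceding the theorem (multiply Theorem~\ref{Nielsen:thma} by $\exp\left(-(s_1t_2+s_2t_1)z\right)$ and equate coefficients), and your fleshed-out version, including the cross-pairing $m_1'=m_1-p_1$, $n_2'=n_2-p_1$, $n_1'=n_1-p_2$, $m_2'=m_2-p_2$, is the right bookkeeping. Be aware, though, that what you actually derived has denominator $p_1!\,p_2!\,(m_1-p_1)!\,(m_2-p_2)!\,(n_1-p_2)!\,(n_2-p_1)!$, which does \emph{not} literally match the printed statement's $(n_1-p_1)!\,(n_2-p_2)!$ --- the printed indices are a typo (test $m_1=n_2=1$, $n_1=m_2=0$: the printed formula gives $xy+z$ for $H_{1,0}H_{0,1}=xy$, since the needed $p_1=1$ term is killed by $(n_1-p_1)!=(-1)!$), and your version is the correct one, consistent with the stated summation limits and with Ismail's Theorem~4.1.
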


\section{ Addition formula for the complex Hermite polynomials }

\begin{thm}\label{additionthm} If $M, N$ are two non-negative integers, then,
we have the following addition formula for the complex Hermite polynomials:	
\begin{align*}	
&H_{M, N} (a_1x_1+\cdots+a_kx_k, b_1y_1+\cdots+b_ky_k, a_1b_1z_1+\cdots+a_kb_kz_k)\\
&=\sum_{m_1, n_1, \ldots, m_k, n_k} \frac{M!N!}{m_1!n_1!\ldots m_k!n_k!}
a_1^{m_1}\cdots a_k^{m_k} b_1^{n_1}\cdots b_k^{n_k}\\
&\qquad \qquad \qquad \qquad \times H_{m_1, n_1}(x_1, y_1, z_1)\cdots H_{m_k, n_k}(x_k, y_k, z_k).
\end{align*}
The sum is taken over all combinations of non-negative integers indices $m_1$ through $m_k$ and $n_1$ through $n_k$  such that
\[
m_1+\cdots+m_k=M,~\text{and}~n_1+\cdots+n_k=N.
\]
\end{thm}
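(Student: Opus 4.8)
The plan is to apply the expansion theorem, Theorem~\ref{LiutriHermite:eqna}, in exactly the same manner as in the proofs of Theorems~\ref{mehlerthm} and~\ref{Nielsen:thma}. Abbreviate
\[
X=a_1x_1+\cdots+a_kx_k,\quad Y=b_1y_1+\cdots+b_ky_k,\quad Z=a_1b_1z_1+\cdots+a_kb_kz_k,
\]
and let $f(x_1,y_1,z_1,\ldots,x_k,y_k,z_k)=H_{M,N}(X,Y,Z)$ denote the left-hand side. Since $H_{M,N}$ is a polynomial in its three arguments and $X,Y,Z$ are linear in the $3k$ variables, $f$ is a polynomial, hence analytic at the origin of $\mathbb{C}^{3k}$.

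First I would verify that $f$ satisfies the $k$ partial differential equations required by Theorem~\ref{LiutriHermite:eqna}. Here the special shape of the third argument is essential: the chain rule gives $\partial f/\partial z_j=a_jb_j\,(\partial_Z H_{M,N})(X,Y,Z)$, while $\partial^2 f/\partial x_j\partial y_j=a_jb_j\,(\partial_X\partial_Y H_{M,N})(X,Y,Z)$, because $\partial Z/\partial z_j=a_jb_j$, $\partial X/\partial x_j=a_j$, $\partial Y/\partial y_j=b_j$, and the remaining first-order partials vanish. Proposition~\ref{triH:propb}, which states $\partial_Z H_{M,N}=\partial_X\partial_Y H_{M,N}$, then makes these two expressions equal for every $j\in\{1,\ldots,k\}$. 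This is the step that explains why the $z$-argument must be $\sum_i a_ib_iz_i$ rather than any other combination.

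Having checked the hypotheses, Theorem~\ref{LiutriHermite:eqna} guarantees a $k$-fold complex Hermite expansion
\[
f=\sum_{m_1,n_1,\ldots,m_k,n_k=0}^\infty \lambda_{m_1,n_1,\ldots,m_k,n_k}\,H_{m_1,n_1}(x_1,y_1,z_1)\cdots H_{m_k,n_k}(x_k,y_k,z_k)
\]
with coefficients independent of all the variables. To determine them I would set every $z_i=0$ and use $H_{m,n}(x,y,0)=x^my^n$, which collapses the identity to $(a_1x_1+\cdots+a_kx_k)^M(b_1y_1+\cdots+b_ky_k)^N=\sum \lambda_{m_1,n_1,\ldots}\,x_1^{m_1}y_1^{n_1}\cdots x_k^{m_k}y_k^{n_k}$. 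Expanding the left side by the multinomial theorem and equating coefficients of $x_1^{m_1}y_1^{n_1}\cdots x_k^{m_k}y_k^{n_k}$ shows that $\lambda_{m_1,n_1,\ldots,m_k,n_k}=0$ unless $m_1+\cdots+m_k=M$ and $n_1+\cdots+n_k=N$, in which case it equals $\tfrac{M!N!}{m_1!n_1!\cdots m_k!n_k!}\,a_1^{m_1}\cdots a_k^{m_k}b_1^{n_1}\cdots b_k^{n_k}$. Substituting these values back into the expansion yields the stated formula.

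I do not expect a genuine obstacle: the argument is mechanical once the PDE check is in place, and the only point requiring care is the chain-rule computation confirming the equations of Theorem~\ref{LiutriHermite:eqna}. As an alternative one could bypass the expansion theorem and argue directly from Proposition~\ref{triH:propa}: since $sX+tY+stZ=\sum_{i=1}^k\big((sa_i)x_i+(tb_i)y_i+(sa_i)(tb_i)z_i\big)$, one has $\exp(sX+tY+stZ)=\prod_{i=1}^k\exp\big((sa_i)x_i+(tb_i)y_i+(sa_i)(tb_i)z_i\big)$; expanding each factor by Proposition~\ref{triH:propa} and comparing the coefficient of $s^Mt^N$ on both sides gives the result, with absolute convergence of the exponential series justifying the rearrangement.
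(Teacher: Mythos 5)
Your proposal is correct and follows essentially the same route as the paper: verify the $k$ partial differential equations via the chain rule and Proposition~\ref{triH:propb}, invoke Theorem~\ref{LiutriHermite:eqna}, then set $z_1=\cdots=z_k=0$ and determine the coefficients by the multinomial theorem. Your alternative generating-function argument via Proposition~\ref{triH:propa} is a valid bonus not in the paper, but the main argument is the paper's proof.
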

\begin{proof} Upon denoting the left-hand side of the equation in Theorem~\ref{additionthm} by
\[
f(x_1, y_1, z_1, \ldots, x_k, y_k, z_k),
\]
it is obvious that this function is analytic at $(0, \ldots, 0)\in \mathbb{C}^{3k}.$
For simplicity, we temporarily denote
\begin{align*}
x&=a_1x_1+\cdots+a_kx_k,\\
y&=b_1y_1+\cdots+b_ky_k,\\
z&=a_1b_1z_1+\cdots+a_kb_kz_k.
\end{align*}
By a simple calculation, we find that for $j=1, \ldots, k,$
\[
\frac{\partial f}{\partial z_j} =\frac{\partial^2 f}{\partial x_j \partial y_j}=a_jb_j \frac{\partial H_{M,N}}{\partial z}.
\]
Thus, by Theorem~\ref{LiutriHermite:eqna},  there exists a sequence $\{\lambda_{m_1, n_1, \ldots, m_k, n_k}\}$ independent of
\[
x_1, y_1, z_1,
\ldots, x_k, y_k, z_k
\]
  such that
\begin{align*}	
&H_{M, N} (a_1x_1+\cdots+a_kx_k, b_1y_1+\cdots+b_ky_k, a_1b_1z_1+\cdots+a_kb_kz_k)\\
&=\sum_{m_1, n_1, \ldots, m_k, n_k=0}^\infty  \lambda_{m_1, n_1, \ldots, m_k, n_k} H_{m_1, n_1}(x_1, y_1, z_1)\cdots H_{m_k, n_k}(x_k, y_k, z_k).
\end{align*}
Setting $z_1=\cdots=z_k=0$ and in the resulting equation using the fact that
\[
H_{m_j, n_j} (x_j, y_j, 0)=x_j^{m_j}y_j^{n_j},
\]
we deduce that
\begin{align*}
&(a_1x_1+\cdots+a_kx_k)^M (b_1y_1+\cdots+b_ky_k)^N\\
&=\sum_{m_1, n_1, \ldots, m_k, n_k=0}^\infty  \lambda_{m_1, n_1, \ldots, m_k, n_k} x_1^{m_1} y_1^{n_1}\cdots x_k^{m_k} y_k^{n_k}.
\end{align*}
Expanding the left-hand side by the multinomial theorem and then equating the coefficients of multiple power series, we complete the proof of Theorem~\ref{additionthm}.
\end{proof}	
\section{A multilinear generating function for the complex Hermite polynomials }
\begin{thm}\label{multipl:mehler} If $|s_1t_1z_1+\cdots+s_rt_rz_r|<1$ and
$a, b, c$ are defined by
\begin{align*}	
a&=s_1x_1+\cdots+s_rx_r,\\
b&=t_1y_1+\cdots+t_ry_r,\\
c&=s_1t_1z_1+\cdots+s_rt_rz_r,
\end{align*}
then, we have the following multilinear generating function for the complex
Hermite polynomials:
\begin{equation}
\frac{1}{(1-cz)} \exp \left(\frac{ax+by+cxy+abz}{1-cz}\right)
\label{mul:eqn1}
\end{equation}
\begin{align}
&=\sum_{m_1, n_1, \ldots, m_r, n_r=0}^\infty   H_{m_1, n_1}(x_1, y_1, z_1)\cdots H_{m_r, n_r}(x_r, y_r, z_r)\nonumber\\
&\qquad \qquad \qquad \qquad \qquad \times
H_{m_1+\cdots+m_r, n_1+\cdots+n_r}(x, y, z) \frac{s_1^{m_1}t^{n_1}\cdots s_r^{m_r}t_r^{n_r}}{m_1! n_1!\cdots m_r! n_r!}.\nonumber
\end{align}
\end{thm}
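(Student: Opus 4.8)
The plan is to invoke the expansion theorem (Theorem~\ref{LiutriHermite:eqna}) with $k=r$, regarding $x_1,y_1,z_1,\ldots,x_r,y_r,z_r$ as the $3r$ expansion variables and $x,y,z,s_1,t_1,\ldots,s_r,t_r$ as fixed parameters. Denote by $f(x_1,y_1,z_1,\ldots,x_r,y_r,z_r)$ the expression \reff{mul:eqn1}, in which $a,b,c$ are the displayed linear forms in the $x_j,y_j,z_j$, and write $g=(ax+by+cxy+abz)/(1-cz)$ for the argument of the exponential. At the origin every $z_j=0$, hence $c=0$ and $1-cz=1$, so $f$ is analytic in a neighbourhood of $(0,\ldots,0)\in\mathbb{C}^{3r}$; the hypothesis $|s_1t_1z_1+\cdots+s_rt_rz_r|<1$ serves only to keep $1-cz\neq 0$ on the relevant polydisc where the resulting series converges.

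First I would check that $f$ satisfies $\partial f/\partial z_j=\partial^2 f/\partial x_j\partial y_j$ for each $j\in\{1,\ldots,r\}$. Since $\partial c/\partial z_j=s_jt_j$, $\partial a/\partial x_j=s_j$, $\partial b/\partial y_j=t_j$, with the remaining first derivatives of $a,b,c$ vanishing, a direct differentiation yields
\[
\frac{\partial f}{\partial z_j}=s_jt_j\,e^g\left[\frac{z+xy}{(1-cz)^2}+\frac{(ax+by+cxy+abz)z}{(1-cz)^3}\right]
\]
and
\[
\frac{\partial^2 f}{\partial x_j\partial y_j}=s_jt_j\,e^g\left[\frac{z}{(1-cz)^2}+\frac{(x+bz)(y+az)}{(1-cz)^3}\right].
\]
These coincide exactly because of the polynomial identity $(x+bz)(y+az)=xy(1-cz)+(ax+by+cxy+abz)z$, which follows by expanding both sides (the cross terms $\pm cxyz$ cancel). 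This is precisely the computation underlying the Mehler formula (Theorem~\ref{mehlerthm}), so it is routine; nevertheless it is the only step requiring real work, and is where I expect the main (bookkeeping) obstacle to lie.

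Granting the PDEs, Theorem~\ref{LiutriHermite:eqna} produces coefficients $\{\lambda_{m_1,n_1,\ldots,m_r,n_r}\}$, independent of the $x_j,y_j,z_j$, with $f=\sum \lambda_{m_1,n_1,\ldots,m_r,n_r}H_{m_1,n_1}(x_1,y_1,z_1)\cdots H_{m_r,n_r}(x_r,y_r,z_r)$. To evaluate them I would set $z_1=\cdots=z_r=0$, so that $c=0$ and $H_{m_j,n_j}(x_j,y_j,0)=x_j^{m_j}y_j^{n_j}$, reducing the left side to $\exp(ax+by+abz)$ with $a=\sum_j s_jx_j$ and $b=\sum_j t_jy_j$. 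By Proposition~\ref{triH:propa} this equals $\sum_{M,N}H_{M,N}(x,y,z)\,a^Mb^N/(M!\,N!)$; expanding $a^M$ and $b^N$ by the multinomial theorem and matching the coefficient of $x_1^{m_1}y_1^{n_1}\cdots x_r^{m_r}y_r^{n_r}$ gives
\[
\lambda_{m_1,n_1,\ldots,m_r,n_r}=H_{m_1+\cdots+m_r,\,n_1+\cdots+n_r}(x,y,z)\,\frac{s_1^{m_1}t_1^{n_1}\cdots s_r^{m_r}t_r^{n_r}}{m_1!\,n_1!\cdots m_r!\,n_r!}.
\]
Substituting these coefficients back into the expansion of $f$ reproduces the right-hand side of the theorem, completing the argument.
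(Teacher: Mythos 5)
Your argument is correct, but it applies the expansion machinery in a genuinely different configuration from the paper. The paper invokes Theorem~\ref{LiutriHermite:eqna} only in its $k=1$ case, treating the single triple $(x,y,z)$ as the expansion variables and all of $x_j, y_j, z_j, s_j, t_j$ as parameters: it checks the one equation $\frac{\partial f}{\partial z}=\frac{\partial^2 f}{\partial x\,\partial y}=\left(\frac{c}{1-cz}+\frac{(a+cy)(b+cx)}{(1-cz)^2}\right)f$ (the factor $(b+cz)$ printed in the paper is a typo for $(b+cx)$), expands $f=\sum_{k,l}\lambda_{k,l}H_{k,l}(x,y,z)$, and determines $\lambda_{k,l}$ by setting $z=0$, where $f(x,y,0)=\prod_{j=1}^r\exp(s_jx_jx+t_jy_jy+s_jt_jz_jxy)$ and $r$ applications of Proposition~\ref{triH:propa}, one per factor, produce the products $H_{m_1,n_1}(x_1,y_1,z_1)\cdots H_{m_r,n_r}(x_r,y_r,z_r)$ inside a constrained sum over $m_1+\cdots+m_r=k$, $n_1+\cdots+n_r=l$. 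You run the dual scheme: Theorem~\ref{LiutriHermite:eqna} with $k=r$ in the variables $(x_j,y_j,z_j)$, so that the products $H_{m_1,n_1}(x_1,y_1,z_1)\cdots H_{m_r,n_r}(x_r,y_r,z_r)$ are the basis elements and $H_{m_1+\cdots+m_r,\,n_1+\cdots+n_r}(x,y,z)$ lands in the coefficient, extracted at $z_1=\cdots=z_r=0$ by a single application of Proposition~\ref{triH:propa} with $(s,t)=(a,b)$ followed by the multinomial theorem. Your PDE verification is sound: the two displayed derivatives are what direct differentiation gives, the identity $(x+bz)(y+az)=xy(1-cz)+(ax+by+cxy+abz)z$ does make them agree, and by the symmetry of $f$ under permuting the index triples one computation settles all $r$ equations at once. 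What each route buys: the paper's needs only the easy $k=1$ case of the expansion theorem and recycles essentially verbatim the derivative computation from the Poisson kernel (Theorem~\ref{mehlerthm}) with relabeled parameters, but pays with the bookkeeping of the constrained double sum defining $\lambda_{k,l}$; yours genuinely exercises the $k$-fold strength of Theorem~\ref{LiutriHermite:eqna} and delivers the coefficients in closed, unconstrained form, at the modest cost of verifying $r$ partial differential equations and noting that the coefficients are allowed to depend on the parameters $x, y, z, s_j, t_j$ (which they do, and which the theorem permits). Both are complete proofs of Theorem~\ref{multipl:mehler}.
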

\begin{proof}
If we use  $f(x, y, z)$ to denote the left-hand side of (\ref{mul:eqn1}), then, it is easily seen that  $f$ is an analytic function of
$x, y, z$ such that  $|s_1t_1z_1+\cdots+s_rt_rz_r|<1.$ Hence $f(x, y, z)$ is analytic at $(x, y, z)=(0, 0, 0)$. By a straightforward computation, we conclude that
\[
\frac{\partial f}{\partial z} =\frac{\partial^2 f}{\partial x \partial y}= \left(\frac{c}{1-cz}+\frac{(a+cy)(b+cz)}{(1-cz)^2}\right)f.
\]
Thus, by Theorem~\ref{LiutriHermite:eqna},  there exists a sequence $\lambda_{k, l}$ independent of $x, y, z$  such that
\begin{equation}
f(x, y, z)=\sum_{k, l=0}^\infty \lambda_{k, l} H_{k, l}(x, y, z).
\label{mul:eqn2}
\end{equation}
Setting $z=0$ in the above equation and using the fact that $H_{k, l}(x, y, 0)=x^ky^l,$ we find that
\begin{equation}
f(x, y, 0)=\sum_{k_1, l=0}^\infty \lambda_{k, l} x^k y^l.
\label{mul:eqn3}
\end{equation}
On other hand, from the definition of $f(x, y, z)$, it is easily seen  that
\[
f(x, y, 0)=\prod_{j=1}^r \exp (s_jx_jx+t_jy_jy+s_jt_jz_jxy).
\]
Using the generating function of the exponential type for the complex Hermite polynomials in Proposition~\ref{triH:propa}, we find that
\begin{align*}
f(x, y, 0)&=\sum_{m_1, n_1, \ldots, m_r, n_r=0}^\infty   H_{m_1, n_1}(x_1, y_1, z_1)\cdots H_{m_r, n_r}(x_r, y_r, z_r)\\
&\qquad \qquad \qquad \qquad \times
 \frac{(s_1x)^{m_1}(t_1y)^{n_1}\cdots (s_rx)^{m_r}(t_ry)^{n_r}}{m_1! n_1!\cdots m_r! n_r!}.
\end{align*}
Comparing this equation with (\ref{mul:eqn3}) and equating the coefficients of $x^k y^l$, we conclude that
\begin{align*}
\lambda_{k, l}&=\sum_{{m_1+\cdots+m_r=k}\atop{n_1+\cdots+n_r=l}}^\infty   H_{m_1, n_1}(x_1, y_1, z_1)\cdots H_{m_r, n_r}(x_r, y_r, z_r)\\
&\qquad \qquad \qquad \qquad \times
\frac{s_1^{m_1}t_1^{n_1}\cdots s_r^{m_r}t_r^{n_r}}{m_1! n_1!\cdots m_r! n_r!}.
\end{align*}
Substituting this into (\ref{mul:eqn2}), we complete the proof of Theorem~\ref{multipl:mehler}.
\end{proof}

\section{A generating function for the products of the Hermite polynomials and the complex Hermite polynomials}
\noindent
As usual, for any real number $x$, we use $[x]$ to denote the greatest integer function. For any complex number $x$, the Hermite polynomials are defined by
\begin{equation}
H_n(x)=\sum_{k=0}^{[\frac{n}{2}]}  \frac{n!}{k!(n-2k)!} (2x)^{n-2k}.
\label{Hermite:eqn1}
\end{equation}
 The exponential generating function for the  Hermite polynomials $H_n(x)$ is given by
\begin{equation}
\exp (2xt-t^2)=\sum_{n=0}^\infty \frac{H_n(x)}{n!} t^n, \quad |t|<\infty.
	\label{Hermite:eqn2}
\end{equation}

The following formula  is equivalent to W\"{u}nsche \cite[Equation (7.4)]{Wunsche2015}. In his paper  Professor W\"{u}nsche just said that his formula can be proved by using auxiliary formulae prepared in Appendix A, but lacks sufficient details. Now we will use Theorem~\ref{LiutriHermite:eqna} to give a very  simple proof of Theorem~\ref{mixed:mehler}.
\begin{thm}\label{mixed:mehler} For $|2stz|<1,$ we have the following generating function for the Hermite polynomials and the complex Hermite polynomials.	
\begin{align*}	
&\sum_{m, n=0}^\infty (-1)^{m+n} H_{m, n}(x, y, z) H_m(u)H_n(v)\frac{s^mt^n}{m!n!}\\
&=\frac{\exp(u^2+v^2)}{\sqrt{1-4s^2t^2z^2}}
\exp\left(\frac{4stz(sx+u)(ty+v)-(sx+u)^2-(ty+v)^2}{1-4s^2t^2z^2}\right).
\end{align*}
\end{thm}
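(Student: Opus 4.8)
The plan is to follow exactly the template that succeeds for Theorems~\ref{mehlerthm}, \ref{Nielsen:thma}, and \ref{multipl:mehler}: regard the right-hand side as a function of $(x,y,z)$ with $s,t,u,v$ held fixed as parameters, verify that it is annihilated by $\partial/\partial z-\partial^2/\partial x\partial y$, invoke the expansion Theorem~\ref{LiutriHermite:eqna} to write it as a complex Hermite series, and then determine the coefficients by setting $z=0$. First I would denote the right-hand side by $f(x,y,z)$. For $|2stz|<1$ the quantity $1-4s^2t^2z^2$ is bounded away from zero, so $\sqrt{1-4s^2t^2z^2}$ is analytic and nonvanishing near the origin; hence $f$ is analytic at $(0,0,0)$ and Theorem~\ref{LiutriHermite:eqna} is applicable once the differential equation is checked.

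The crucial step is to verify that
\[
\frac{\partial f}{\partial z}=\frac{\partial^2 f}{\partial x\,\partial y}.
\]
To organize the computation I would abbreviate $A=sx+u$, $B=ty+v$, and $D=1-4s^2t^2z^2$, so that $f=\exp(u^2+v^2)\,D^{-1/2}\exp(g)$ with $g=(4stz\,AB-A^2-B^2)/D$. Logarithmic differentiation gives $\partial f/\partial x=f\cdot s(4stzB-2A)/D$ and $\partial f/\partial y=f\cdot t(4stzA-2B)/D$; differentiating the former in $y$ produces an extra cross term coming from $\partial B/\partial y=t$, namely $4s^2t^2z\,f/D$. On the other side, $\partial_z D^{-1/2}=4s^2t^2z\,D^{-3/2}$ together with a quotient-rule computation of $\partial g/\partial z$ yields $\partial f/\partial z$. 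After dividing through by $f$, the two sides each carry a term $4s^2t^2z/D$ which cancels, and the asserted identity collapses to the purely polynomial statement
\[
4st\,AB\,D+8s^2t^2z\,(4stz\,AB-A^2-B^2)=st\,(4stzA-2B)(4stzB-2A),
\]
which I would confirm by expanding both sides (each equals $4st\,AB+16s^3t^3z^2AB-8s^2t^2z\,A^2-8s^2t^2z\,B^2$).

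Granting the differential equation, Theorem~\ref{LiutriHermite:eqna} supplies coefficients $\lambda_{m,n}$ independent of $x,y,z$ with $f(x,y,z)=\sum_{m,n\ge0}\lambda_{m,n}H_{m,n}(x,y,z)$. Setting $z=0$ and using $H_{m,n}(x,y,0)=x^my^n$ gives $f(x,y,0)=\sum_{m,n}\lambda_{m,n}x^my^n$. At $z=0$ the exponent telescopes, since $\exp(u^2+v^2)\exp(-(sx+u)^2-(ty+v)^2)=\exp(-s^2x^2-2sux)\exp(-t^2y^2-2tvy)$, and each factor is a Hermite generating function read through \reff{Hermite:eqn2} at argument $-sx$ (respectively $-ty$), so
\[
f(x,y,0)=\sum_{m,n\ge0}\frac{(-1)^{m+n}H_m(u)H_n(v)}{m!\,n!}\,s^mt^n\,x^my^n.
\]
Matching the coefficients of $x^my^n$ forces $\lambda_{m,n}=(-1)^{m+n}H_m(u)H_n(v)s^mt^n/(m!\,n!)$, and reinserting this into the Hermite expansion of $f$ is precisely the asserted formula.

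I expect the only genuine work to be the differential-equation check, which is the single hard step: it is a lengthy but mechanical verification that the nonlinear exponent $g$ is compatible with $\partial/\partial z-\partial^2/\partial x\partial y$, and the bookkeeping with the factor $D^{-1/2}$ (whose $z$-derivative produces the matching $4s^2t^2z/D$ terms on both sides) is where an error is most likely to arise. Everything afterward---the analyticity, the appeal to Theorem~\ref{LiutriHermite:eqna}, and the $z=0$ coefficient extraction via \reff{Hermite:eqn2} and Proposition~\ref{triH:propa}---is routine and parallels the proofs already given in the earlier sections.
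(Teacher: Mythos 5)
Your proposal is correct and follows essentially the same route as the paper's own proof: denote the right-hand side by $f(x,y,z)$, verify $\partial f/\partial z=\partial^2 f/\partial x\,\partial y$, invoke Theorem~\ref{LiutriHermite:eqna}, and extract $\lambda_{m,n}$ at $z=0$ via the Hermite generating function \reff{Hermite:eqn2}. Your polynomial identity for the derivative check is accurate (indeed, it silently corrects a small typo in the paper's displayed computation, where $(yv+t)$ should read $(ty+v)$).
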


\begin{proof} If we use $f(x, y, z)$ to denote the right-hand side of the equation in Theorem~\ref{mixed:mehler}, then, it is easily seen that $f$ is analytic at $(0, 0, 0)$. A elementary calculation shows that
\begin{align*}
&\frac{\partial f}{\partial z} =\frac{\partial^2 f}{\partial x \partial y}=\\
& \left\{\frac{4s^2t^2z}{1-4s^2t^2z^2}
+\frac{4st(2stz(sx+u)-(yv+t))(2stz(ty+v)-(sx+u))}{(1-4s^2t^2z^2)^2} \right\}f
\end{align*}
Hence, by Theorem~\ref{LiutriHermite:eqna},  there exists a sequence $\lambda_{m, n}$ independent of $x, y, z$  such that
\begin{equation}
f(x, y, z)=\sum_{m, n=0}^\infty \lambda_{m, n} H_{m, n}(x, y, z).
\label{mix:eqn1}
\end{equation}
Setting $z=0$ in the above equation and using the fact that $H_{m, n}(x, y, 0)=x^my^n,$ we deduce that
\begin{equation}
\exp(-(sx)^2-2sxu-(ty)^2-2tyv)=\sum_{m, n=0}^\infty \lambda_{m, n} x^m y^n.
\label{mix:eqn2}
\end{equation}
Using the exponential  generating function for the Hermite polynomials, we find that
\begin{align*}
\exp(-(sx)^2-2sxu)=\sum_{m=0}^\infty H_m(u)\frac{(-sx)^m}{m!},\\
\exp(-(ty)^2-2tyv)=\sum_{n=0}^\infty H_n(v) \frac{(-ty)^n}{n!}.
\end{align*}
Upon substituting these two equations into the left-hand side of (\ref{mix:eqn2}) and equating the coefficients of like power, we obtain
\[
\lambda_{m, n}=(-1)^{m+n}H_m(u)H_n(v) \frac{s^m t^n}{m! n!}.
\]
Combining this equation with (\ref{mix:eqn1}), we complete the proof of Theorem~\ref{mixed:mehler}.
\end{proof}
Theorem~\ref{mixed:mehler} contains the Mehler formula for the Hermite polynomials as a special case, which was  discovered by Mehler \cite[p.174, Equation(18)]{MehlerFG1866} in 1866. One can also find this important formula in most books on special functions, for example, \cite[p.280, Equation (6.1.13)]{AndAskRoy1999},   \cite[p.111, Equation(4.417)]{BealsWong2010}, \cite[p.108, Equation (4.7.6)]{Ismail2009}, \cite[p. 198, Equation (2)]{Rainville1960}. One very simple proof of this formula can be found in
\cite{LiuHermite2017}.
\begin{thm}\label{mixed:mehlera} For $|2t|<1,$ we have the Mehler formula for the Hermite polynomials:
\[
\sum_{n=0}^{\infty} \frac{H_n(u)H_n(v)}{n!}t^n
=\frac{1}{\sqrt{1-4t^2}}
\exp\left(\frac{4tuv-4(u^2+v^2)t^2}{1-4t^2}\right).
\]	
\end{thm}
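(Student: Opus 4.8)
The plan is to derive Theorem~\ref{mixed:mehlera} as a direct specialization of the more general Theorem~\ref{mixed:mehler}, so no fresh machinery is needed. The governing observation is that the bivariate polynomials $H_{m,n}(x,y,z)$ degenerate to a diagonal ``Kronecker'' form when both of the first two arguments vanish. Indeed, straight from Definition~\ref{triHermitedefn}, in $H_{m,n}(0,0,z)=\sum_{k=0}^{m\land n} k!{m\choose k}{n\choose k}0^{m-k}0^{n-k}z^k$ the factor $0^{m-k}$ forces $k=m$ and $0^{n-k}$ forces $k=n$, so every term dies unless $m=n$, in which case the single surviving term $k=m=n$ gives $n!\,z^n$. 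Thus $H_{m,n}(0,0,z)=n!\,z^n$ when $m=n$ and $H_{m,n}(0,0,z)=0$ otherwise. First I would set $x=y=0$ (and, to reduce $stz$ to a single variable, $s=z=1$) in Theorem~\ref{mixed:mehler}, whose hypothesis $|2stz|<1$ then becomes precisely the required $|2t|<1$.

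With these substitutions the left-hand side of Theorem~\ref{mixed:mehler} collapses from a double sum to a single sum: using the diagonal fact above, only the terms with $m=n$ contribute, the sign $(-1)^{m+n}=(-1)^{2n}=1$, and the factor $H_{n,n}(0,0,1)=n!$ cancels one of the two $n!$'s, leaving
\[
\sum_{n=0}^\infty \frac{H_n(u)H_n(v)}{n!}\,t^n,
\]
which is exactly the left-hand side of Theorem~\ref{mixed:mehlera}. On the right-hand side, setting $x=y=0$ turns $sx+u$ into $u$ and $ty+v$ into $v$, while $s^2t^2z^2$ becomes $t^2$, producing $\dfrac{\exp(u^2+v^2)}{\sqrt{1-4t^2}}\exp\!\left(\dfrac{4tuv-u^2-v^2}{1-4t^2}\right)$. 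It then remains to merge the two exponential factors: the combined exponent $(u^2+v^2)+\dfrac{4tuv-u^2-v^2}{1-4t^2}$ simplifies, after clearing the denominator, to $\dfrac{4tuv-4t^2(u^2+v^2)}{1-4t^2}$, which is the exponent appearing in Theorem~\ref{mixed:mehlera}.

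There is no serious obstacle here; the proof is a clean specialization. The one step demanding care, and the only place a routine computation is unavoidable, is the algebraic consolidation of the two exponentials on the right, where the prefactor $\exp(u^2+v^2)$ must be absorbed into the rational exponent and the $u^2+v^2$ terms seen to telescope against the $-u^2-v^2$ in the numerator. I would also remark explicitly that the specialization $s=z=1$ loses no generality, since in both sides of Theorem~\ref{mixed:mehler} the variables $s,t,z$ enter (after $x=y=0$) only through the single product $stz$, so renaming $stz$ as $t$ recovers the full one-parameter identity.
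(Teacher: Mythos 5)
Your proposal is correct and follows essentially the same route as the paper: the paper likewise specializes Theorem~\ref{mixed:mehler} at $x=y=0$, uses $H_{m,n}(0,0,z)=\delta_{m,n}\,n!\,z^n$ to collapse the double sum to the diagonal, and then sets $s=z=1$ and merges the exponentials. Your explicit verification of the diagonal identity from Definition~\ref{triHermitedefn} and your remark that $s,t,z$ enter only through $stz$ are just slightly more detailed versions of steps the paper takes for granted.
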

\begin{proof}
Upon taking $x=y=0$ in the equation in Theorem~\ref{mixed:mehler} and using the fact that
\[
H_{m, n} (0, 0, z)=\delta_{m,n} n! z^n,
\]
in the resulting equation,  we immediately  conclude  that
\[
\sum_{n=0}^{\infty} \frac{H_n(u)H_n(v)}{n!}(stz)^n
=\frac{\exp(u^2+v^2)}{\sqrt{1-4s^2t^2z^2}}
\exp\left(\frac{4stuv-(u^2+v^2)}{1-4s^2t^2z^2}\right).
\]
Putting $s=z=1$ in this equation and simplifying we complete the proof of
Theorem~\ref{mixed:mehlera}.
\end{proof}
In the same way we can prove the following more general generating function formula, which appeared to be new.
\begin{thm}\label{mixed:mehlerb} If $k$ is a non-negative integer and  $|2stz|<1,$ we have the following generating function for the Hermite polynomials and the complex Hermite polynomials:
	\begin{align*}	
	&\sum_{m, n=0}^\infty (-1)^{m+n} H_{m, n}(x, y, z) H_{m+k}(u)H_n(v)\frac{s^mt^n}{m!n!}\\
	&=\frac{\exp(u^2+v^2)}{(1-4s^2t^2z^2)^{(k+1)/2}}
	H_k\left(\frac{u+sx-2stz(v+ty)}{\sqrt{1-4s^2t^2z^2}}\right)\\
&\qquad \times 	\exp\left(\frac{4stz(sx+u)(ty+v)-(sx+u)^2-(ty+v)^2}{1-4s^2t^2z^2}\right).
	\end{align*}
\end{thm}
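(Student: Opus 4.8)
The plan is to follow exactly the template that the paper uses for Theorems~\ref{mixed:mehler} and \ref{multipl:mehler}: I denote the right-hand side by $f(x,y,z)$, observe that for $|2stz|<1$ it is analytic at $(0,0,0)$, verify that it satisfies the heat-type equation $\frac{\partial f}{\partial z}=\frac{\partial^2 f}{\partial x\partial y}$, and then invoke Theorem~\ref{LiutriHermite:eqna} to write $f(x,y,z)=\sum_{m,n\ge 0}\lambda_{m,n}H_{m,n}(x,y,z)$ with coefficients $\lambda_{m,n}$ independent of $x,y,z$. The coefficients will then be pinned down by setting $z=0$ and matching the resulting double power series in $x$ and $y$.

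To organize the partial differential equation check, I would first record that $f=\phi\, g$, where $g:=e^{u^2+v^2}(1-4s^2t^2z^2)^{-1/2}E$ is precisely the $k=0$ right-hand side (with $E$ the exponential factor common to Theorems~\ref{mixed:mehler} and \ref{mixed:mehlerb}) and $\phi:=(1-4s^2t^2z^2)^{-k/2}H_k(\xi)$ with $\xi=\bigl(u+sx-2stz(v+ty)\bigr)/\sqrt{1-4s^2t^2z^2}$. The proof of Theorem~\ref{mixed:mehler} already establishes $\frac{\partial g}{\partial z}=\frac{\partial^2 g}{\partial x\partial y}$, so substituting $f=\phi g$ into the desired equation and cancelling the common second-order factor reduces everything to
\[
\frac{\partial \phi}{\partial z}=\frac{\partial^2\phi}{\partial x\partial y}+\frac{\partial\phi}{\partial x}\,\frac{\partial\log g}{\partial y}+\frac{\partial\phi}{\partial y}\,\frac{\partial\log g}{\partial x}.
\]
Verifying this relation — using the partial derivatives of $\xi$ and of $1-4s^2t^2z^2$ together with the Hermite identities $H_k'(\xi)=2kH_{k-1}(\xi)$ and $H_k''(\xi)=2\xi H_k'(\xi)-2kH_k(\xi)$ — is where I expect the genuine work to lie. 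It is a finite but involved computation, and it is the only place where the argument differs substantively from the already-proved $k=0$ case; so the hard part will be this direct confirmation of the heat equation for the $H_k$-twisted right-hand side.

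Once the equation is confirmed, Theorem~\ref{LiutriHermite:eqna} yields the Hermite expansion of $f$, and I would extract the coefficients by putting $z=0$ and using $H_{m,n}(x,y,0)=x^my^n$. At $z=0$ the function collapses to the product $\bigl[H_k(u+sx)e^{-2sux-s^2x^2}\bigr]\bigl[e^{-2tvy-t^2y^2}\bigr]$. The $y$-factor expands at once through the generating function \reff{Hermite:eqn2} as $\sum_n(-1)^nH_n(v)\,t^ny^n/n!$. For the $x$-factor I would invoke the shifted generating relation
\[
\sum_{m=0}^\infty H_{m+k}(u)\frac{w^m}{m!}=H_k(u-w)\,e^{2uw-w^2},
\]
which follows by differentiating \reff{Hermite:eqn2} $k$ times in its exponential variable and applying the Rodrigues formula; taking $w=-sx$ turns the left side into $\sum_m(-1)^mH_{m+k}(u)\,s^mx^m/m!$ and the right side into exactly $H_k(u+sx)e^{-2sux-s^2x^2}$.

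Comparing coefficients of $x^my^n$ then forces $\lambda_{m,n}=(-1)^{m+n}H_{m+k}(u)H_n(v)\,s^mt^n/(m!\,n!)$, and substituting this back into the Hermite expansion of $f$ reproduces precisely the left-hand side of Theorem~\ref{mixed:mehlerb}. I expect the coefficient extraction to be entirely routine once the auxiliary shifted generating function is in hand; the whole difficulty is concentrated in the partial differential equation verification described above.
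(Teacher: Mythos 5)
Your proposal is correct and takes essentially the same route as the paper, which proves Theorem~\ref{mixed:mehlerb} only by the remark that it follows ``in the same way'' as Theorem~\ref{mixed:mehler} — that is, verify that the right-hand side satisfies $\partial f/\partial z=\partial^2 f/\partial x\partial y$, invoke Theorem~\ref{LiutriHermite:eqna}, and fix the coefficients by setting $z=0$, exactly your plan. The details you add are sound and genuinely fill in what the paper leaves implicit: your factorization $f=\phi g$ reduces the PDE check to the Hermite differential equation $H_k''(\xi)-2\xi H_k'(\xi)+2kH_k(\xi)=0$ (one can check that after cancelling the term $2stD^{-(k+3)/2}H_k'(\xi)(2stzA-B)$ common to both sides, with $A=sx+u$, $B=ty+v$, $D=1-4s^2t^2z^2$, this is precisely what remains), and the shifted generating function $\sum_{m=0}^\infty H_{m+k}(u)\frac{w^m}{m!}=H_k(u-w)e^{2uw-w^2}$ with $w=-sx$ is exactly what is needed to identify $\lambda_{m,n}=(-1)^{m+n}H_{m+k}(u)H_n(v)s^mt^n/(m!\,n!)$ at $z=0$.
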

Upon putting $x=y=0$ in Theorem~\ref{mixed:mehlerb}  and using the fact that
\[
H_{m, n} (0, 0, z)=\delta_{m,n} n! z^n,
\]
in the resulting equation and finally setting $s=z=1,$ we derive the following formula due to Weisner \cite[Equation (4.9)]{Weisner1959}.
\begin{thm}\label{mixed:mehlerd}
For $|2t|<1,$ we have 	
\begin{align*}
&\sum_{n=0}^{\infty} \frac{H_{n+k}(u)H_n(v)}{n!}t^n\\
&=\frac{1}{(1-4t^2)^{(k+1)/2}} H_k \left( \frac{u-2tv}{\sqrt{1-4t^2}}\right)
\exp\left(\frac{4tuv-4(u^2+v^2)t^2}{1-4t^2}\right).
\end{align*}
\end{thm}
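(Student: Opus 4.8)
The plan is to mimic the proof of Theorem~\ref{mixed:mehler} verbatim at the structural level, treating the case $k=0$ handled there as a warm-up. First I would set $f(x,y,z)$ equal to the right-hand side of the asserted identity. Because $|2stz|<1$ forces $1-4s^2t^2z^2$ to be bounded away from $0$ near $z=0$, both the radical $\sqrt{1-4s^2t^2z^2}$ and the power $(1-4s^2t^2z^2)^{(k+1)/2}$ are analytic there, and hence $f$ is analytic at $(0,0,0)$. The decisive structural step is to check that $f$ obeys the heat-type equation $\partial f/\partial z=\partial^2 f/\partial x\partial y$; granting this, Theorem~\ref{LiutriHermite:eqna} supplies a sequence $\{\lambda_{m,n}\}$, independent of $x,y,z$, with $f(x,y,z)=\sum_{m,n\ge 0}\lambda_{m,n}H_{m,n}(x,y,z)$.

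To pin down the $\lambda_{m,n}$ I would specialise to $z=0$, exactly as in Theorem~\ref{mixed:mehler}. There the prefactor and the radical both reduce to $1$, the argument of $H_k$ collapses to $u+sx$, and after absorbing $\exp(u^2+v^2)$ the exponent simplifies, so that
\[
f(x,y,0)=H_k(u+sx)\,\exp\!\big(-(sx)^2-2sxu\big)\cdot\exp\!\big(-(ty)^2-2tyv\big),
\]
which separates into an $x$-factor and a $y$-factor. The generating function \reff{Hermite:eqn2} handles the $y$-factor directly, giving $\exp(-(ty)^2-2tyv)=\sum_{n\ge 0}H_n(v)(-ty)^n/n!$. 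For the $x$-factor I would invoke the shifted generating function
\[
\sum_{m\ge 0}H_{m+k}(u)\frac{w^m}{m!}=\exp(2uw-w^2)\,H_k(u-w),
\]
which follows from \reff{Hermite:eqn2} by writing $\sum_{m,k}H_{m+k}(u)w^m\tau^k/(m!k!)=\exp(2u(w+\tau)-(w+\tau)^2)$ and comparing powers of $\tau$. Taking $w=-sx$ turns the left side into $H_k(u+sx)\exp(-(sx)^2-2sxu)$, so the $x$-factor equals $\sum_{m}H_{m+k}(u)(-sx)^m/m!$. Matching the product of these two expansions against $f(x,y,0)=\sum_{m,n}\lambda_{m,n}x^my^n$ yields $\lambda_{m,n}=(-1)^{m+n}H_{m+k}(u)H_n(v)s^mt^n/(m!\,n!)$, and substituting back gives the theorem.

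I expect the genuine obstacle to be the verification of $\partial_z f=\partial_{xy}^2 f$ for the closed form. In the $k=0$ case of Theorem~\ref{mixed:mehler} both sides turned out to be the same explicit scalar multiple of $f$; here the extra factor $H_k(A)$, with $A=(u+sx-2stz(v+ty))/\sqrt{1-4s^2t^2z^2}$, means that the chain rule produces additional terms carrying $H_k'(A)=2kH_{k-1}(A)$ and $H_k''(A)$. To make the two sides coincide I would eliminate the second derivative through Hermite's differential equation $H_k''(A)-2AH_k'(A)+2kH_k(A)=0$; this is precisely the relation needed so that, after collecting the coefficients of $H_k(A)$ and of $H_{k-1}(A)$ separately, $\partial_z f$ and $\partial_{xy}^2 f$ agree identically. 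The algebra is bulkier than before because of the dependence of $A$ and of the exponent on all three of $x,y,z$, but it introduces no new idea beyond Theorem~\ref{LiutriHermite:eqna} and Hermite's ODE; the remainder is the same coefficient-extraction already rehearsed for Theorem~\ref{mixed:mehler}.
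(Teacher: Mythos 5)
Your PDE argument is sound, and it is in substance the paper's own route: what you have written out is a proof of Theorem~\ref{mixed:mehlerb} (the three-variable generating function), which the paper dispatches with the remark that it is proved ``in the same way'' as Theorem~\ref{mixed:mehler}. Your two supporting ingredients --- the shifted generating function $\sum_{m\ge 0}H_{m+k}(u)\,w^m/m!=\exp(2uw-w^2)H_k(u-w)$, correctly derived by comparing powers of $\tau$ in $\exp\bigl(2u(w+\tau)-(w+\tau)^2\bigr)$, and the elimination of $H_k''(A)$ via Hermite's differential equation when verifying $\partial f/\partial z=\partial^2 f/\partial x\,\partial y$ --- are both correct, and they in fact supply details the paper leaves unstated.

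The gap is that the statement you were asked to prove, Theorem~\ref{mixed:mehlerd}, contains no variables $x,y,z$ at all, so it cannot itself serve as the $f(x,y,z)$ fed into Theorem~\ref{LiutriHermite:eqna}; your closing claim that ``substituting back gives the theorem'' lands on Theorem~\ref{mixed:mehlerb}, not on the stated identity. What is missing is precisely the specialization that constitutes the paper's entire proof of Theorem~\ref{mixed:mehlerd}: set $x=y=0$ in Theorem~\ref{mixed:mehlerb} and use $H_{m,n}(0,0,z)=\delta_{m,n}\,n!\,z^n$, which annihilates every term with $m\neq n$ and collapses the double series to
\begin{equation*}
\sum_{n=0}^{\infty}\frac{H_{n+k}(u)H_n(v)}{n!}(stz)^n,
\end{equation*}
while on the closed-form side the argument of $H_k$ becomes $(u-2stzv)/\sqrt{1-4s^2t^2z^2}$; then put $s=z=1$ (so the hypothesis $|2stz|<1$ becomes $|2t|<1$) and absorb the prefactor through
\begin{equation*}
\exp(u^2+v^2)\exp\left(\frac{4tuv-u^2-v^2}{1-4t^2}\right)
=\exp\left(\frac{4tuv-4(u^2+v^2)t^2}{1-4t^2}\right).
\end{equation*}
This final step is routine, but without it the stated one-variable Weisner formula is never actually reached, so your writeup should be amended to include it.
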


\section{Acknowledgments}
The author is  grateful to  the editor and the referees for their valuable comments and suggestions.

\end{document}